\newcommand{\floor}[1]{\left \lfloor{ #1 }\right \rfloor}
\title{Planar graphs with the maximum number of induced 4-cycles or 5-cycles}
\date{September 28, 2021}
\author{Michael Savery}
\affil{Mathematical Institute, University of Oxford, Oxford OX2 6GG, UK, and the Heilbronn Institute for Mathematical Research, Bristol, UK\\ \texttt{savery@maths.ox.ac.uk}}
\newtheorem{theorem}{Theorem}
\newtheorem{lemma}[theorem]{Lemma}
\newtheorem{claim}{Claim}
\newtheorem{conjecture}[theorem]{Conjecture}
\newtheorem{question}[theorem]{Question}
\theoremstyle{definition}
\newtheorem{definition}{Definition}
\begin{document}

\newcounter{casenum}
  \newenvironment{caseof}{\setcounter{casenum}{1}}{\vskip.5\baselineskip}
  \newcommand{\case}[2]{\vskip.5\baselineskip\par\noindent {\bfseries Case \arabic{casenum}:} #1\\#2\addtocounter{casenum}{1}}

\maketitle

\begin{abstract}
    For large $n$ we determine exactly the maximum numbers of induced $C_4$ and $C_5$ subgraphs that a planar graph on $n$ vertices can contain. We show that $K_{2,n-2}$ uniquely achieves this maximum in the $C_4$ case, and we identify the graphs which achieve the maximum in the $C_5$ case. This extends work in a paper by Hakimi and Schmeichel and a paper by Ghosh, Gy\H{o}ri, Janzer, Paulos, Salia, and Zamora which together determine both maxima asymptotically.
\end{abstract}

\section{Introduction}

An important class of problems in extremal graph theory concerns determining the maximum number of induced copies of a small graph $H$ that can be contained in a graph on $n$ vertices. These questions were first considered by Pippenger and Golumbic in \cite{pippenger} where they showed among other things that for every $k$-vertex graph $H$, the maximum number of induced copies of $H$ in an $n$-vertex graph is asymptotically at least $\frac{n^k}{k^k-k}$. The maximum is now known asymptotically for all graphs $H$ on at most four vertices except the path of length 3, as well as for certain complete partite graphs. See \cite{even-zohal} and its references for a good summary of these results, and see \cite{hatami} and \cite{yuster} for some results for other graphs $H$.

The case where $H$ is a cycle has received particular attention. Pippenger and Golumbic conjectured in their paper that the lower bound stated above is asymptotically tight for cycles of length at least 5. This conjecture was verified for 5-cycles by Balogh, Hu, Lidick\'{y}, and Pfender in \cite{balogh}, where they determined exactly the maximum number of induced 5-cycles in a graph on $n$ vertices when $n$ is large. The best upper bound known for general cycles is due to Kr\'{a}l', Norin, and Volec in \cite{kral}, who showed that for all $n$ and $k\geq 5$, every $n$-vertex graph contains at most $2n^k/k^k$ induced $k$-cycles.

A closely related problem is to determine the maximum number of induced copies of $H$ that can be contained in a planar graph on $n$ vertices. We will write $f_I(n,H)$ for this quantity and $f(n,H)$ for the corresponding quantity when the copies of $H$ do not have to be induced.

We consider in particular the case where $H$ is a small cycle. Hakimi and Schmeichel showed in \cite{hakimi} that $f(n,C_3)=3n-8$ for $n\geq 3$, and $f(n,C_4)=\frac{1}{2}(n^2+3n-22)$ for $n\geq 4$. Since a 3-cycle in a graph is always induced, this gives $f_I(n,C_3)=3n-8$. Also, it is straightforward to see that the complete bipartite graph $K_{2,n-2}$ contains $\frac{1}{2}(n^2-5n+6)$ induced 4-cycles, so $f_I(n,C_4)=\frac{1}{2}n^2+O(n)$, as observed in \cite{ghosh2020maximum}. In \cite{gyori} Gy\H{o}ri, Paulos, Salia, Tompkins, and Zamora determined $f(n,C_5)$ exactly for all $n\geq 5$, and in \cite{ghosh2020maximum} Ghosh, Gy\H{o}ri, Janzer, Paulos, Salia, and Zamora showed that $f_I(n, C_5)=\frac{1}{3}n^2 +O(n)$. 

In \cite{cox2021counting} and \cite{cox2021maximum} Cox and Martin determined $f(n,H)$ asymptotically when $H$ is a small even cycle, showing that $f(n,C_{2k})=\left(\frac{n}{k}\right)^k+o(n^k)$ for $k\in \{3,4,5,6\}$. For $n\equiv 0\pmod{k}$, the graph $G_{n,k}$ defined by replacing every second vertex in a $2k$-cycle with $\frac{n}{k}-1$ copies of that vertex contains $(\frac{n}{k}-1)^k$ induced $2k$-cycles, hence also $f_I(n,C_{2k})= \left(\frac{n}{k}\right)^k+o(n^k)$ for $k\in \{3,4,5,6\}$. Cox and Martin go on to conjecture that $f(n,C_{2k})=\left(\frac{n}{k}\right)^k+o(n^k)$ for all $k\geq 7$, which would similarly determine $f_I(n,C_{2k})$ asymptotically for all $k\geq 7$.

Much less is known for odd cycles of length greater than 5. In the same spirit as above, by evenly blowing up $k$ pairwise non-adjacent vertices in a $(2k+1)$-cycle until the graph has approximately $n$ vertices, one can obtain a lower bound of $\left(\frac{n}{k}\right)^k+o(n^k)$ for $f_I(n,C_{2k+1})$ for all $k\geq 1$. One can obtain a slightly better lower bound for $f(n,C_{2k+1})$ by adding a path through each of the blown up sets of vertices in the graph $G_{n,k}$ defined above to obtain a planar graph containing $2k\left(\frac{n}{k}\right)^k+o(n^k)$ non-induced copies of $C_{2k+1}$. These blow-up constructions were first given in \cite{ghoshp5} and \cite{gyori2020generalized}, and were also considered in \cite{cox2021counting} and \cite{cox2021maximum}. Hakimi and Schmeichel showed in \cite{hakimi} that $f(n,C_{2k+1})=O(n^k)$, and this seems to be the best known upper bound.

For results on $f(n,H)$ for various other graphs $H$ see  \cite{alon}, \cite{cox2021counting}, \cite{eppstein}, \cite{ghoshp5}, \cite{grzesik}, \cite{wood}, and \cite{wormald}. In the general case, Huynh, Joret, and Wood proved a far-reaching result in \cite{huynh2021subgraph} which gives the order of magnitude of $f(n,H)$ for all $H$ in terms of a graph parameter called the `flap-number' of $H$. Very recently, in \cite{liu2021homomorphism}, Liu showed (Corollary 6.1 in that paper) that for all graphs $H$, $f_I(n,H)=\Theta(f(n,H))$, thus determining the order of magnitude of $f_I(n,H)$ for all $H$.

In this paper we will determine $f_I(n,C_4)$ and $f_I(n,C_5)$ exactly for large $n$, and will identify the graphs which exhibit these maxima. Our result for 4-cycles is the following.

\begin{theorem}\label{4-cycles}
For large $n$, $f_I(n,C_4)=\frac{1}{2}(n^2-5n+6)$. Moreover, for large $n$, the only $n$-vertex planar graph which contains $f_I(n,C_4)$ induced 4-cycles is the complete bipartite graph $K_{2,n-2}$.
\end{theorem}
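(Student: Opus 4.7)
The plan is to extend the asymptotic result $f_I(n, C_4) = \frac{n^2}{2} + O(n)$ from \cite{ghosh2020maximum} to an exact bound via a stability argument. Let $G$ be an $n$-vertex planar graph, write $m(u,v) = |N(u) \cap N(v)|$, and let $e(u,v)$ denote the number of edges in the subgraph induced on $N(u) \cap N(v)$. I would begin with the diagonal-pair identity
\[ 2\, c_4^I(G) = \sum_{\{u,v\} \notin E(G)} \left(\binom{m(u,v)}{2} - e(u,v)\right), \]
where $c_4^I(G)$ is the number of induced $4$-cycles in $G$; this holds because every induced $4$-cycle $abcd$ has exactly two diagonal (opposite, non-adjacent) pairs $\{a,c\}$ and $\{b,d\}$, each of which witnesses a non-edge among its common neighbors. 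A direct computation shows $K_{2,n-2}$ attains $2 c_4^I = (n-2)(n-3)$: the apex pair contributes $\binom{n-2}{2}$ and each of the $\binom{n-2}{2}$ leaf pairs contributes $1$.

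The essential structural input is the $K_{3,3}$-freeness of planar graphs. If two distinct non-adjacent pairs $P_1 \neq P_2$ each have at least $3$ common neighbors and share at least $3$ common neighbors, then three shared neighbors together with three distinct vertices from $P_1 \cup P_2$ (which has size $\geq 3$ since $P_1 \neq P_2$) form $K_{3,3}$ as a subgraph, a contradiction. Hence the common-neighborhood sets of any two ``rich'' pairs (those with $m \geq 3$) intersect in at most $2$ vertices, so these sets are almost disjoint. Using convexity of $\binom{\cdot}{2}$ together with the asymptotic lower bound $c_4^I(G) \geq \frac{n^2}{2} - O(n)$ for the extremal $G$, this near-disjointness forces a single pair $\{a, b\}$ to dominate the diagonal sum, yielding $m(a,b) \geq n - O(1)$.

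Given this approximate structure, I would argue that $a$ and $b$ are non-adjacent and in fact $N(a) \cap N(b) = V(G) \setminus \{a, b\}$, so $G$ contains $K_{2,n-2}$ as a spanning subgraph with apex set $\{a,b\}$. The final step is to exclude any additional edges: any edge $xy$ among the $n-2$ leaves chord the 4-cycle $axby$, destroying it as an induced cycle, and one shows using the $K_{3,3}$-based near-disjointness that new induced $4$-cycles through pairs other than $\{a,b\}$ cannot fully compensate for the loss. An analogous analysis handles the case of a leaf $w$ failing to be adjacent to both $a$ and $b$.

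The hard part will be the refinement from approximate to exact structure. Obtaining $m(a,b) \geq n - O(1)$ is the qualitative step; strengthening to $m(a,b) = n-2$ exactly and ruling out all extra edges requires carefully quantifying how each possible deviation from $K_{2,n-2}$ changes $c_4^I(G)$, and using the planarity-based inequalities to show the losses strictly outweigh any gains. The case analysis of possible deviations, and the bookkeeping within each case, is where most of the technical work lies; it also explains the need for $n$ to be large, as small additive slack terms must be absorbed by the quadratic gap $\binom{n-2}{2}$.
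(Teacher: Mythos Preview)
Your approach is genuinely different from the paper's, so let me first describe theirs and then flag where yours has a gap.

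\textbf{The paper's approach.} Rather than a global counting identity, the paper runs a vertex-deletion argument. It proves two local lemmas: (i) if every vertex lies in at least $n-3$ induced $C_4$'s and some vertex has degree~2, then $G\cong K_{2,n-2}$; (ii) under the same hypothesis, no vertex can have degree $3$, $4$, or $5$. Since a planar graph has minimum degree at most~5, these lemmas together force $G\cong K_{2,n-2}$ whenever every vertex is in at least $n-3$ induced $C_4$'s. The paper then shows that any planar $n$-vertex graph has a vertex in at most $n-3$ induced $C_4$'s, giving $f_I(n,C_4)\le f_I(n-1,C_4)+(n-3)$; comparing with $K_{2,n-2}$ turns this into an equality for large $n$, and then any extremal $G$ must have every vertex in at least $n-3$ induced $C_4$'s, so the lemmas apply. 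No stability or concentration argument is needed.

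\textbf{Where your argument has a gap.} Your diagonal identity and the $K_{3,3}$-based bound $|S_{P_1}\cap S_{P_2}|\le 2$ for distinct pairs are both correct. The problem is the step ``near-disjointness plus convexity forces a single pair $\{a,b\}$ with $m(a,b)\ge n-O(1)$.'' You have not said how this follows, and it is not a one-liner. Two issues:
\begin{itemize}
\item The pairwise bound $|S_i\cap S_j|\le 2$ does not by itself bound the number of moderately rich pairs, nor the total $\sum_i\binom{m_i}{2}$ coming from them. For instance, it allows up to three pairs with $m_i>n/3$, and far more with smaller $m_i$; one needs an additional argument to show their total contribution is $o(n^2)$ below the extremal value.
\item In $K_{2,n-2}$ itself, fully half of the diagonal sum comes from the $\binom{n-2}{2}$ leaf pairs with $m=2$. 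These are not ``rich,'' so your near-disjointness says nothing about them, yet you must still explain why in a near-extremal $G$ the $m=2$ contribution cannot be inflated to compensate for the absence of a single huge pair.
\end{itemize}
So the qualitative step you describe as easy is in fact where the real work lies; convexity alone does not separate the scenario of one pair with $m\approx n$ from scenarios with several pairs of moderate $m$ plus a sea of $m=2$ pairs. You would need a further structural input (for example, a bound on the number of rich pairs, or a careful use of the Hakimi--Schmeichel bound $\sum_P\binom{m_P}{2}\le n^2+O(n)$ together with the intersection bound) to close this. The paper sidesteps all of this by working locally at a minimum-degree vertex.
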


Turning to the 5-cycle case, we first define a family of graphs which we will show to be the only $n$-vertex planar graphs containing $f_I(n,C_5)$ induced 5-cycles for large $n$.

\begin{definition}\label{reqd form def}
A graph $G$ on $n\geq 19$ vertices is \emph{of the required form} if it contains distinct vertices $u_1$, $u_2$, $u_3$, and $w$ such that the remainder of its vertices can be partitioned into sets $A=\{a_1,\dots,a_{|A|}\}$, $B=\{b_1,\dots,b_{|B|}\}$, $C=\{c_1,\dots,c_{|C|}\}$, and $Z=\{z_1,\dots,z_6\}$ such that all of the following hold:
\begin{enumerate}
    \item \begin{enumerate}
        \item if $n\equiv 1 \pmod{3}$, then $|A|=|B|=\frac{n-7}{3}$ and $|C|=
        \frac{n-16}{3}$,
        \item if $n\equiv 2 \pmod{3}$, then $|A|=\frac{n-5}{3}$, $|B|=\frac{n-8}{3}$, and $|C|=\frac{n-17}{3}$, or $|A|=|B|=\frac{n-8}{3}$ and $|C|=\frac{n-14}{3}$,
        \item if $n\equiv 0 \pmod{3}$, then $|A|=\frac{n}{3}-2$, $|B|=\frac{n}{3}-3$, and $|C|=\frac{n}{3}-5$, or $|A|=|B|=\frac{n}{3}-2$ and $|C|=\frac{n}{3}-6$,
    \end{enumerate}
    \item the edge set of $G$ contains the edges $u_1u_2$, $u_1u_3$, $u_2u_3$, $u_1z_1$, $u_2z_1$, $u_1z_2$, $u_2z_4$, $z_1z_2$, $z_1z_3$, $z_1z_4$, $z_2z_3$, $z_3z_4$, $z_2w$, $z_3w$, $z_4w$, $u_2z_5$, $u_3z_5$, $z_5w$, $u_1z_6$, $u_3z_6$, and $z_6w$, and the edges in the three complete bipartite graphs with vertex classes $\{u_1,w\}$ and $A$, $\{u_2,w\}$ and $B$, and $\{u_3,w\}$ and $C$, and
    \item the remaining edges of $G$ are taken from $a_{|A|}z_2$, $z_4b_1$, $b_{|B|}z_5$, $z_5c_1$, $c_{|C|}z_6$, $z_6a_1$, $a_ia_{i+1}$ for $1\leq i\leq |A|-1$, $b_ib_{i+1}$ for $1\leq i\leq |B|-1$, and $c_ic_{i+1}$ for $1\leq i\leq |C|-1$ .
\end{enumerate}
\end{definition}

We refer to the edges in point 3 of Definition \ref{reqd form def} as \emph{optional} edges of a graph of the required form, and if none of these edges are present then we say that $G$ is a \emph{principal graph of the required form}. An illustration of a general graph of the required form is given in Figure \ref{reqd form}. In this illustration red lines represent optional edges. A graph of the required form is clearly planar. It is interesting to note that a graph of the required form in which all optional edges are present is a maximal planar graph.

\begin{figure}[!ht]
    \centering
    \includegraphics[width=0.8\textwidth]{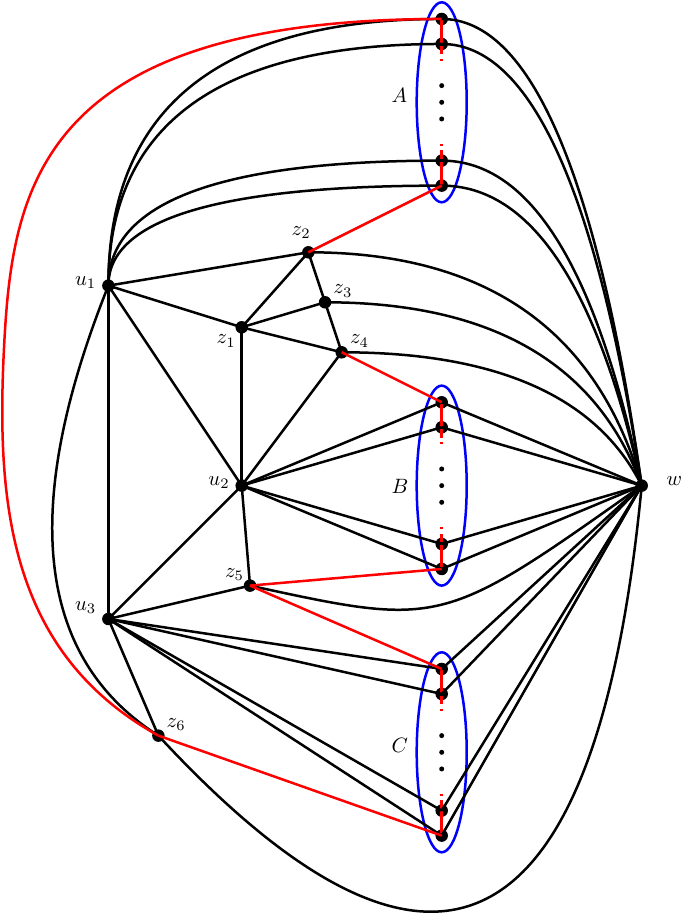}
    \caption{A graph of the required form}
    \label{reqd form}
\end{figure}

Roughly speaking, a large graph of the required form is built from a smaller graph of the required form with $n\equiv 1\pmod{3}$ vertices by ``adding vertices to $A$, $B$, and $C$ as evenly as possible''. If the number of vertices in the smaller graph has some other value modulo 3, then we need to add the first one or two vertices to particular classes before adding the rest as evenly as possible.

In a large graph of the required form almost all of the vertices are in $A$, $B$, or $C$. For each $a\in A$ and $b\in B$ the cycle $u_1awbu_2$ is an induced 5-cycle in the graph. Similarly, there is an induced 5-cycle containing each pair of vertices $b\in B$ and $c\in C$, and each pair $a\in A$ and $c\in C$. Since these 5-cycles are different for different pairs of vertices, and each of $A$, $B$, and $C$ has size $\frac{1}{3}n-O(1)$, this accounts for $3(\frac{1}{3}n-O(1))^2=\frac{1}{3}n^2+O(n)$ induced 5-cycles in $G$.

\begin{theorem}\label{5-cycles}
For large $n$, \[f_I(n,C_5)=\begin{cases}\frac{1}{3}(n^2-8n+22), & \text{if $n\equiv 1\pmod{3}$} \\\frac{1}{3}(n^2-8n+21), & \text{if $n\equiv 0,2 \pmod{3}$}. \end{cases}\] Moreover, for large $n$, if $G$ is a planar graph on $n$ vertices, then $G$ contains $f_I(n,C_5)$ induced 5-cycles if and only if $G$ is of the required form.
\end{theorem}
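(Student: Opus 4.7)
The plan is to combine a direct count for graphs of the required form with a structural analysis for the upper bound, bootstrapping off the asymptotic bound $f_I(n, C_5) = \frac{n^2}{3} + O(n)$ from Ghosh et al.~\cite{ghosh2020maximum}.

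For the lower bound, I would count induced 5-cycles in a graph of the required form by grouping them according to their intersection with $A \cup B \cup C$. The main contribution of $|A||B|+|B||C|+|A||C|$ comes from cycles of the form $u_i x w y u_j$ with $i \neq j$ and $x, y$ in the appropriate classes; the remaining cycles (using at most one vertex of $A \cup B \cup C$) are counted by a finite case analysis on the subgraph induced by the ``core'' $\{u_1, u_2, u_3, w, z_1, \ldots, z_6\}$ together with its neighbourhood, and contribute an $O(n)+O(1)$ term. A short local check shows that toggling any single optional edge neither creates nor destroys an induced 5-cycle — every potential new cycle through the edge has a chord, and no potential destroyed cycle existed before — so the total depends only on $|A|, |B|, |C|$. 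Plugging in the cardinalities from Definition~\ref{reqd form def}, and verifying that the two admissible partitions in each of the $n \equiv 0, 2 \pmod 3$ cases yield the same total, recovers the claimed formula.

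For the upper bound, let $G$ be an $n$-vertex planar graph with $f_I(n, C_5)$ induced 5-cycles. The strategy is to identify four core vertices $u_1, u_2, u_3, w$ of degree $\Omega(n)$ and show that the remainder of $G$ must match Definition~\ref{reqd form def}. Since every induced 5-cycle through a vertex $v$ is determined by an ordered pair of induced length-2 paths from $v$, and $G$ has $\Omega(n^2)$ induced 5-cycles, some vertex $w$ must lie on $\Omega(n^2)$ of them and hence have degree $\Omega(n)$. Repeating this kind of counting together with the planar bound $|E(G)| \leq 3n-6$ and the exclusion of $K_{3,3}$ as a minor should force three additional vertices $u_1, u_2, u_3$ of degree $\Omega(n)$ forming a triangle with $w$ in the prescribed position. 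Partitioning $V(G) \setminus \{u_1,u_2,u_3,w\}$ by adjacency pattern with this core and invoking planarity then restricts the possible patterns to three dominant ones (giving the classes $A, B, C$) together with only a constant number of exceptional vertices.

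The main obstacle will be identifying the exceptional set exactly: showing that it consists of precisely six vertices $z_1, \ldots, z_6$ with the adjacencies prescribed in Definition~\ref{reqd form def} and nothing more. Any deviation from this structure must cost more induced 5-cycles than it contributes, and the analysis is complicated by the optional edges, which enlarge the extremal family and force the argument to be tolerant of toggling. I expect this step to require a delicate case analysis comparing small local modifications of the core — likely sharing techniques with the proof of Theorem~\ref{4-cycles} and with the arguments of Ghosh et al.~\cite{ghosh2020maximum} — together with a bootstrap confirming that the initial identification of the four core vertices cannot have missed another near-universal vertex elsewhere in $G$.
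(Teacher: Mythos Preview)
Your lower-bound plan is essentially the paper's Lemma~\ref{5-cycle count}, and the observation that optional edges are irrelevant is handled there too, so that part is fine.

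The upper-bound plan, however, has a concrete error and diverges substantially from what the paper does. Your pigeonhole step is wrong: the total number of induced $C_5$'s is $\Theta(n^2)$, so summing incidences over vertices gives $5\cdot\Theta(n^2)$, and pigeonhole yields only a vertex lying on $\Omega(n)$ cycles, not $\Omega(n^2)$. From $\Omega(n)$ cycles through $v$ you cannot conclude $d(v)=\Omega(n)$ either: by Lemma~\ref{X and Y}, a vertex of bounded degree can already sit on $\Theta(n)$ induced $5$-cycles. So the mechanism you propose for locating $w$ and then $u_1,u_2,u_3$ does not get off the ground, and the subsequent ``partition by adjacency pattern to the core'' step has no core to work with.

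More structurally, the paper does not try to pin down the four hubs directly. Instead it finds vertices $a,b,c$ whose \emph{principal} neighbourhoods are exactly $\{u_i,w\}$, via the ``empty $K_{2,7}$'' machinery of Ghosh et al.\ (Lemmas~\ref{Ghosh lemma 2} and~\ref{Ghosh Cor 1}) combined with Lemma~\ref{X and Y}; the hubs $u_1,u_2,u_3,w$ then emerge as the principal neighbours of these low-degree witnesses. The exact value of $f_I(n,C_5)$ is obtained not by a one-shot structural bound but by a vertex-deletion recursion: one shows that any extremal $G$ contains a vertex in at most $\lfloor 2(n-1)/3\rfloor-2$ induced $C_5$'s (Lemmas~\ref{key lemma 1} and~\ref{key lemma 2}), whence $f_I(n,C_5)\le f_I(n-1,C_5)+\lfloor 2(n-1)/3\rfloor-2$, and then compares with the matching increment for graphs of the required form. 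The characterisation of extremal graphs is bootstrapped from the $n\equiv 1\pmod 3$ case via this same deletion. Your proposal lacks any analogue of this recursion, and without it I do not see how you would convert an approximate structural picture into the exact formula, let alone nail the six exceptional vertices $z_1,\dots,z_6$ precisely.
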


The conclusion of Theorem \ref{5-cycles} does not hold for all $n$. Indeed, for $n=10$ Theorem \ref{5-cycles} would assert that no planar graph on 10 vertices contains more than 14 induced 5-cycles, but the graph shown in Figure \ref{10 vertex} contains 16. In this figure, the colours are only intended to highlight the structure of the graph.

\begin{figure}[!ht]
    \centering
    \includegraphics[width=0.7\textwidth]{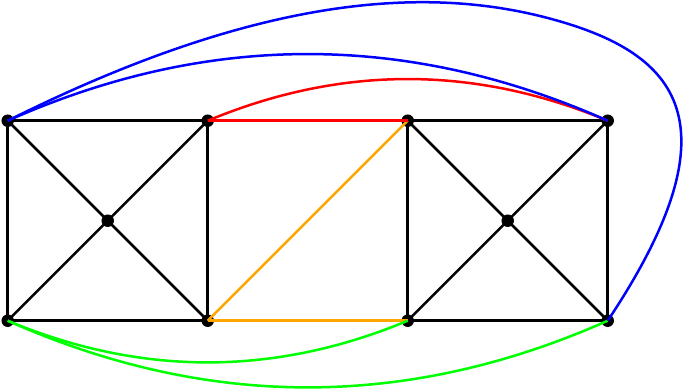}
    \caption{A planar graph with 10 vertices and 16 induced 5-cycles}
    \label{10 vertex}
\end{figure}

For longer cycles we make the following conjecture based on the constructions given above.

\begin{conjecture}
For $k\geq 6$ and $n$ sufficiently large relative to $k$, the $n$-vertex graph obtained by blowing up $\floor{\frac{k}{2}}$ pairwise non-adjacent vertices in a $k$-cycle to sets of as equal size as possible contains the most induced  $k$-cycles of any planar graph.
\end{conjecture}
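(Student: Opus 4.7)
The plan is to follow the strategy used in this paper for $C_5$: combine an asymptotic upper bound on $f_I(n, C_k)$ with a stability argument, and then pin down the exact maximum and the extremal graphs by a local analysis. Write $j=\lfloor k/2\rfloor$. The lower bound is routine: in the conjectured blow-up graph, each vertex $v$ of a blown-up set $V_i$ has only two neighbours (the two vertices of $C_k$ flanking the blown-up vertex), so if $v$ lies in an induced $k$-cycle $C$ then both of these neighbours must also lie in $C$. Since $k\geq 5$, the resulting $K_{2,2}$ on any two vertices of $V_i$ together with their shared neighbours forces no two vertices of $V_i$ to appear together in $C$; hence the count of induced $k$-cycles is $\prod_i|V_i|=(n/j)^j+O(n^{j-1})$, with the exact value depending on the residue of $n$ modulo $j$.

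For the upper bound, the first step is to establish an asymptotic inequality $f_I(n,C_k)\leq (n/j)^j+o(n^j)$. For small even $k$ this should be extractable from Cox--Martin combined with Liu's theorem; for larger even $k$ and for odd $k$ one would need a sharpened form of the Huynh--Joret--Wood bound that tracks the correct leading constant, potentially via the flap-number of $C_k$. The heart of the argument is then a stability statement: any $n$-vertex planar graph with at least $(1-\varepsilon)(n/j)^j$ induced $k$-cycles contains $j$ pairwise disjoint large independent sets $V_1,\dots,V_j$ such that, after discarding $o(n)$ exceptional vertices, almost every vertex of $V_i$ has the same two bridging neighbours. Planarity enters crucially here through Euler's formula, particularly the bipartite bound $|E|\leq 2n-4$, which constrains the number of bridging vertices and the density of their adjacencies to the $V_i$. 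A final local edge-swap analysis then determines which additional ``optional'' edges can coexist with the blow-up structure without reducing the cycle count, yielding the exact extremal characterization.

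The main obstacle will be the stability step and the exact characterization. As the $k=5$ case in this paper already shows, extremal graphs typically contain small irregular substructures --- the vertices $u_1,u_2,u_3$ and $z_1,\dots,z_6$ in Definition \ref{reqd form def} --- whose precise form must be uncovered by delicate case analysis; for odd $k$ one should expect this to be more severe, since the $\lceil k/2\rceil$ bridging vertices break the symmetry between the blow-up classes. One must also rule out qualitatively different near-extremal candidates, for instance structures obtained by gluing blow-ups of shorter cycles along shared vertices, or mixed constructions that sacrifice some induced $k$-cycles for others; controlling these will likely require planarity arguments going beyond those developed here for $k=4$ and $k=5$.
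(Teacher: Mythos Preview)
This statement is a \emph{conjecture} in the paper, not a theorem: the paper offers no proof, only the lower-bound construction and some related asymptotic results. There is therefore nothing in the paper to compare your attempt against.

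Your proposal is not a proof but a programme, and you are candid about this. The genuine gaps are exactly where you locate them. First, the asymptotic upper bound $f_I(n,C_k)\leq (n/j)^j+o(n^j)$ with the correct leading constant is itself open for all odd $k\geq 7$ and for even $k\geq 14$: Cox--Martin covers only $C_6,C_8,C_{10},C_{12}$, and neither Liu's result nor Huynh--Joret--Wood gives the constant. So the ``first step'' you describe already requires solving an open problem. Second, even granting the asymptotics, the stability and exact-characterization steps are substantial; your own observation that the $k=5$ extremal graphs carry an irregular ten-vertex substructure on top of the blow-up is precisely a warning that the pure blow-up may fail to be exactly extremal for odd $k\geq 7$ as well, which would falsify the conjecture as stated rather than prove it. In short, what you have written is a reasonable sketch of how one might attack the problem, but none of the steps beyond the lower bound is currently known, and the paper does not claim otherwise.
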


It would also be interesting to know how many induced $k$-cycles can be contained in an $n$-vertex graph which can be embedded in a fixed surface for surfaces other than the sphere.

\begin{question}\label{other surfaces}
For each $k\geq 3$ and each surface $\Sigma$ other than the sphere, what is the maximum number of induced $k$-cycles which can be contained in an $n$-vertex graph which can be embedded in $\Sigma$?
\end{question}

The results of Huynh, Joret, and Wood \cite{huynh2021subgraph} and Liu \cite{liu2021homomorphism} described above extend to all surfaces $\Sigma$. Combined, these results show that the answer to Question \ref{other surfaces} is $\Theta(n^{\floor{k/2}})$ for all $k\geq 3$ and all surfaces $\Sigma$.

\subsection{Notation and organisation of the paper}
In this paper all graphs are simple, and we use the following standard graph theoretic notation. For a graph $G$ we write $V(G)$ and $E(G)$ for the vertex and edge sets of $G$ respectively. For a vertex $v$ of a graph $G$ we write $N_G(v)$ for the set of neighbours of $v$ in $G$, also called the neighbourhood of $v$ in $G$, and $d_G(v)$ for the degree of $v$ in $G$. In both cases we drop the subscript if the graph in question is clear. For a graph $G$ and a set $S\subseteq V(G)$, we write $G[S]$ for the induced subgraph of $G$ with vertex set $S$, and $G-S$ for the induced subgraph with vertex set $V(G)\setminus S$. We write $K_{a,b}$ for the complete bipartite graph with parts of size $a$ and $b$, and we write $C_k$ for the $k$-cycle graph.

In Section \ref{4-cycle section} we prove Theorem \ref{4-cycles} after stating and proving two preliminary lemmas. Section \ref{5-cycles section} contains the proof of Theorem \ref{5-cycles}, which is broken down into three small preliminary lemmas and two larger lemmas which are proved in Sections \ref{key lemma 1 sect} and \ref{key lemma 2 sect} respectively. The proofs of these two lemmas contain the bulk of the work in proving the theorem. One of these proofs involves some repetitive case checking, which is handled in Appendix \ref{appendix}.

\subsection{Update}
Very recently, the authors of \cite{ghosh2020maximum} updated their paper and independently proved Theorem 2 \cite{ghosh2021maximum}.

\section{Proof of Theorem \ref{4-cycles}}
\label{4-cycle section}
In this section we give two preliminary lemmas and then prove Theorem \ref{4-cycles}.

\begin{lemma}\label{degree 2 then done}
Let $n$ be large, and suppose that $G$ is an $n$-vertex planar graph in which every vertex of $G$ is in at least $n-3$ induced 4-cycles. If $G$ contains a vertex of degree 2, then $G$ is isomorphic to $K_{2,n-2}$.
\end{lemma}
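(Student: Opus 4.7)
The plan is to first extract a $K_{2,n-2}$ subgraph of $G$, then use planarity to severely constrain the remaining edges, and finally rule them out entirely via the induced 4-cycle hypothesis. Let $v$ be a vertex of degree 2 in $G$, with neighbours $x$ and $y$. Any induced 4-cycle through $v$ must use both edges $vx$ and $vy$ (since these are $v$'s only edges), so has the form $v\,x\,w\,y\,v$ with $w\in V(G)\setminus\{v,x,y\}$; for the cycle to be induced we need $xy\notin E(G)$ and $w\in N(x)\cap N(y)$. Since $v$ lies in at least $n-3$ induced 4-cycles and only $n-3$ vertices sit in $V(G)\setminus\{v,x,y\}$, we must have $xy\notin E(G)$ and $N(x)\cap N(y)=V(G)\setminus\{x,y\}$. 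Thus $G$ contains $K_{2,n-2}$ as a spanning subgraph with parts $\{x,y\}$ and $V_0:=V(G)\setminus\{x,y\}$, and moreover $N_G(x)=N_G(y)=V_0$.

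Let $H=G[V_0]$, so the edges of $G$ split into the $2(n-2)$ edges of $K_{2,n-2}$ and the edges of $H$. To control $H$, fix a planar embedding of $G$ and restrict it to $K_{2,n-2}$. By Euler's formula the restricted embedding has exactly $n-2$ faces, and since $K_{2,n-2}$ is bipartite with $2n-4$ edges, a handshake count over faces forces each face to have length exactly $4$. Writing $w_1,\dots,w_{n-2}$ for the cyclic order of $V_0$ around $x$, these quadrilateral faces are precisely $x\,w_i\,y\,w_{i+1}$ (indices mod $n-2$). Any edge of $H$ lies in the interior of some such face and must join two non-adjacent boundary vertices; apart from the non-edge $xy$, the only option is $w_iw_{i+1}$. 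Hence $H$ is a subgraph of the Hamilton cycle $w_1w_2\cdots w_{n-2}w_1$ on $V_0$. Because $v\in V_0$ has degree $2$ in $G$, it is isolated in $H$, so the two edges of this cycle incident to $v$ are absent; thus $H$ contains no cycle at all and in particular no induced 4-cycle.

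To conclude, suppose for contradiction that some $a\in V_0$ has $d_H(a)\ge 1$, and count the induced 4-cycles of $G$ through $a$, written $a\,p\,q\,r\,a$ with $p,r\in N_G(a)=\{x,y\}\cup N_H(a)$. A ``mixed'' choice, say $p\in\{x,y\}$ and $r\in N_H(a)\subseteq V_0$, would force the chord $pr$ since $N_G(x)=N_G(y)=V_0$; hence either $\{p,r\}=\{x,y\}$ or $p,r\in N_H(a)$. Cycles of the first type satisfy $q\in V_0\setminus(N_H(a)\cup\{a\})$ and contribute $n-3-d_H(a)$; cycles of the second type correspond to induced 4-cycles of $H$ through $a$, of which there are none. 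So $a$ lies in at most $n-3-d_H(a)\le n-4$ induced 4-cycles, contradicting the hypothesis. Hence $H$ has no edges and $G\cong K_{2,n-2}$. The only genuine obstacle is the planarity step; once the quadrilateral face structure of $K_{2,n-2}$ is used to pin down $H$ as a subgraph of a Hamilton cycle (in fact of a path, after deleting $v$'s incident edges), the rest is routine counting.
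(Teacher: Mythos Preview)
Your proof is correct, but it takes a different route from the paper's. After the common first step establishing the spanning $K_{2,n-2}$, the paper argues directly via Kuratowski: if $x_1x_2x_3x_4$ were a 4-cycle in $G$ avoiding $x$ and $y$, then $\{x,x_1,x_3\}$ and $\{y,x_2,x_4\}$ would form a $K_{3,3}$, so every 4-cycle meets $\{x,y\}$; since $x$ and $y$ are adjacent to all of $V_0$, every \emph{induced} 4-cycle then contains both, and a one-line count forces each $z\in V_0$ to have no neighbours in $V_0$. Your argument instead reads off the face structure of the embedded $K_{2,n-2}$ via Euler's formula to show $H=G[V_0]$ is contained in a Hamilton cycle (indeed a path, once $v$ is isolated), and then counts. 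Both are short; the paper's $K_{3,3}$ trick is slightly slicker and avoids any explicit face analysis, while your approach yields finer structural information about the extra edges (they must be ``consecutive'' $w_iw_{i+1}$) before the final count.
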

\begin{proof}
Suppose that $v\in V(G)$ has degree 2. Let the neighbours of $v$ be $u$ and $w$. Since $v$ is in at least $n-3$ induced 4-cycles, the remaining $n-3$ vertices of $G$ are all adjacent to both $u$ and $w$, so $u$ and $w$ are adjacent to all vertices of $G$ except each other.

Suppose that $x_1x_2x_3x_4$ is a 4-cycle in $G$ which doesn't contain $u$ or $w$. Then $\{u,x_1,x_3\}$ and $\{w,x_2,x_4\}$ form the partite sets of a subdivision of $K_{3,3}$ in $G$, which contradicts the planarity of $G$ by Kuratowski's theorem. Hence every 4-cycle in $G$ contains $u$ or $w$. Any induced 4-cycle in $G$ containing $u$ must also contain $w$, and vice versa, since no vertex in an induced 4-cycle can be a neighbour of all the others in the cycle. Hence every induced 4-cycle in $G$ has the form $uxwy$ for some $x,y\in V(G)\setminus\{u,w\}$.

Let $z\in V(G)\setminus\{u,w\}$. Then every induced 4-cycle in $G$ containing $z$ is of the form $uzwy$ where $y\in V(G)\setminus\{u,w,z\}$. By assumption, there are at least $n-3$ such cycles, so $z$ is not adjacent to any vertex in $G$ other than $u$ and $w$. Therefore $G$ is isomorphic to $K_{2,n-2}$ as required.
\end{proof}

\begin{lemma}\label{no degree 3,4,5}
Let $n$ be large, and suppose that $G$ is an $n$-vertex planar graph in which every vertex of $G$ is in at least $n-3$ induced 4-cycles. Then $G$ contains no vertices of degree 3, 4, or 5.
\end{lemma}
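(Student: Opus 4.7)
The plan is to argue by contradiction. Suppose $v \in V(G)$ has degree $d \in \{3, 4, 5\}$; the aim is to exhibit some vertex of $G$ that lies in fewer than $n - 3$ induced 4-cycles, violating the hypothesis. Write $N(v) = \{x_1, \ldots, x_d\}$, $W = V(G) \setminus (N(v) \cup \{v\})$, and $t_y = |N(y) \cap N(v)|$ for $y \in W$. Any induced 4-cycle through $v$ has the form $v x_i y x_j$ with $x_i x_j \notin E(G)$ and $y \in N(x_i) \cap N(x_j) \cap W$, so the number of such cycles equals $\sum_{y \in W} p(y)$, where $p(y) \leq \binom{t_y}{2}$ counts the non-adjacent pairs in $N(v) \cap N(y)$.

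Three planarity inputs would drive the analysis. First, the bipartite subgraph of $G$ between $N(v)$ and $V(G) \setminus N(v)$ is planar bipartite on $n$ vertices, so $\sum_{y \in W} t_y \leq 2n - 4 - d$. Second, Kuratowski-type obstructions sharply restrict the multiset $\{t_y\}_{y \in W}$: for each $3$-subset $S \subseteq N(v)$ at most one $y \in W$ has $S \subseteq N(y)$, because $\{v, y, y'\}$ and $S$ would otherwise form a $K_{3,3}$, and analogous bounds from $K_{3,4}$ and $K_{3,5}$ non-planarity constrain the number of $y$ with $t_y$ close to $d$. Third, Lemma~\ref{degree 2 then done} forces $G \not\cong K_{2,n-2}$ (since $v$ has degree $3$, $4$, or $5$), so the hypothesis implies every vertex of $G$ has degree at least $3$.

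I would then split by $d$. For $d = 3$ the inputs above give $\sum p(y) \leq n - 2$, so closing the gap to strictly below $n - 3$ requires subcasing on the number of edges in $G[N(v)]$ and on the existence of a ``universal'' vertex $y^* \in W$ adjacent to all of $N(v)$. In the resulting tight configurations every pair-neighbour $y$ with $t_y = 2$ must have a further neighbour (by the degree-$\geq 3$ condition), and computing the induced 4-cycles through $y$ directly shows that its count is essentially bounded by the size of the pair-class containing $y$, which is strictly less than $n - 3$ and so contradicts the hypothesis at $y$. For $d \in \{4, 5\}$ the quantity $\sum p(y)$ can be as large as $n + O(1)$, so the contradiction must come from a different vertex: the near-extremal structure---a unique $y^* \in W$ adjacent to all of $N(v)$ together with roughly $n - d - 2$ pair-neighbours distributed among the pairs of $N(v)$---is rigid enough that one can locate either a pair-neighbour or a sparse $x_i$ whose own 4-cycle count is squeezed below $n - 3$.

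The main obstacle will be the $d \in \{4, 5\}$ cases: since a direct bound on $v$'s own count does not suffice, the structure of $G$ around $v$ has to be pinned down very precisely, and the contradiction then has to be extracted from another vertex by carefully threading the planar bipartite bound, the $K_{a,b}$-free obstructions, and Lemma~\ref{degree 2 then done}. The corresponding subcase analysis is where the bulk of the work will lie.
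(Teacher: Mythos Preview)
Your setup matches the paper's: both start from a vertex $v$ of degree $d\in\{3,4,5\}$, count induced $4$-cycles through $v$ via common neighbours of pairs in $N(v)$, and use the $K_{3,3}$ obstruction to cap the number of $y\in W$ with $t_y\geq 3$. The arguments then diverge. The paper does \emph{not} split on $d$. Instead, from the $\geq n-3$ cycles through $v$ and the $K_{3,3}$ bound it deduces that at least $n-O(1)$ vertices of $W$ are adjacent to at least two of the $x_i$; by pigeonhole some pair $x_1,x_2$ has $k\geq (n-3)/10$ common neighbours $y_1,\dots,y_k$ in $W$. This fan partitions the plane into regions $R_1,\dots,R_k$. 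Because $N(v)\setminus\{x_1,x_2\}$ lies in the single region containing $v$, any vertex in the interior of a far-away $R_i$ is adjacent to at most one of the $x_j$, and there are only $O(1)$ such vertices; hence some $y_i$ has \emph{both} adjacent regions $R_{i-1},R_i$ empty. Then $N(y_i)\subseteq\{x_1,x_2,y_{i-1},y_{i+1}\}$, the path $y_{i-1}y_iy_{i+1}$ extends to no induced $4$-cycle (the only common neighbours of $y_{i-1},y_{i+1}$ are $x_1,x_2\in N(y_i)$), so all $\geq n-3$ induced $4$-cycles through $y_i$ use the path $x_1y_ix_2$. This forces $d(y_i)=2$, and Lemma~\ref{degree 2 then done} yields the contradiction.

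Your route could perhaps be completed, but the assertion that ``the count through a pair-neighbour $y$ is essentially bounded by the size of its pair-class'' is precisely where the difficulty hides: a pair-neighbour $y$ may have further neighbours lying in the interior of an adjacent fan-region, and induced $4$-cycles through those neighbours are not controlled by the pair-class size. The paper's empty-region trick is exactly what isolates a $y$ with no such extra neighbours. For $d\in\{4,5\}$ your plan is much vaguer---the ``rigidity'' of the near-extremal structure is asserted rather than proved, and which vertex to blame is left open---so the promised subcase analysis would be substantial. The fan argument dispatches all three values of $d$ uniformly in under a page.
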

\begin{proof}
Suppose for a contradiction that $v\in V(G)$ has degree 3, 4, or 5. Let the neighbours of $v$ in $G$ be $x_1,\dots,x_{d(v)}$. The number of induced 4-cycles in $G$ containing $v$ is at most \begin{equation}\label{4-cycle sum}
\sum_{1\leq i<j\leq d(v)}|(N(x_i)\cap N(x_j))\setminus( \{v\}\cup N(v))|.\end{equation}

For each set of three neighbours of $v$, there is at most one common neighbour of all three other than $v$, otherwise $G$ has an obvious $K_{3,3}$ subgraph. So for each distinct pair of terms in sum (\ref{4-cycle sum}), of which there are at most \[\binom{\binom{5}{2}}{2}=45,\] there is at most one vertex contributing to both terms, i.e. there are at most 45 vertices which contribute to more than one term in the sum. Each of these could contribute up to 10 in total, since there are at most 10 terms in the sum, so the total contribution to the sum from these vertices is at most 450.

By assumption $v$ is in at least $n-3$ induced 4-cycles, so there are at least $n-453$ induced 4-cycles containing $v$ in which the vertex opposite $v$ in the cycle is not in any other induced 4-cycle containing $v$. This implies that there are at least $n-453$ vertices in $V(G)\setminus(\{v\}\cup N(v))$ which are adjacent to at least two neighbours of $v$. So there are at most 449 vertices in $V(G)\setminus(\{v\}\cup N(v))$ which are not adjacent to at least two neighbours of $v$.

By the pigeonhole principle, we may assume that $|(N(x_1)\cap N(x_2))\setminus( \{v\}\cup N(v))|\geq (n-3)/10$. Draw $G$ and consider the induced drawing of the complete bipartite graph with parts $\{x_1,x_2\}$ and $(N(x_1)\cap N(x_2))\setminus( \{v\}\cup N(v))$. Label the vertices in the latter set as $y_1,\dots,y_k$ in natural order, where $k\geq (n-3)/10$. The drawing of the complete bipartite graph splits the plane into $k$ regions $R_1,\dots,R_k$, where for each $i$, $R_i$ is bounded by the cycle $x_1y_ix_2y_{i+1}$, where here and henceforth we take subscript addition to be modulo $k$.

Let $R_j$ be the region with $v$ in its interior. Then every vertex in $N(v)$ is in $R_j$ (including its boundary). Hence every vertex with a neighbour in $N(v)\setminus\{x_1,x_2\}$ is in $R_{j-2}\cup R_{j-1}\dots\cup R_{j+2}$ (including its boundary). There are at most 449 vertices in $V(G)\setminus(\{v\}\cup N(v))$ which are not adjacent to at least two neighbours of $v$, so there are at most 449 values of $i$ other than $j-2,\dots,j+2$ for which the interior of $R_i$ contains a vertex. Thus since $n$ is large, there exists $i$ such that $R_{i-1}$ and $R_i$ have no vertices in their interiors.

By assumption $y_i$ is in at least $n-3$ induced 4-cycles. Since $N(y_i)\subseteq\{x_1,x_2,y_{i-1},y_{i+1}\}$, every such 4-cycle either contains the path $y_{i-1}y_iy_{i+1}$ or the path $x_1y_ix_2$. However since $n$ is large (so $k\geq 5$), the existence of $y_{i-2}$ and $y_{i+2}$ implies there is no common neighbour of $y_{i-1}$ and $y_{i+1}$ in $G$ other than $x_1$ and $x_2$, which are both also neighbours of $y_i$. Hence there are no induced 4-cycles containing the path $y_{i-1}y_iy_{i+1}$. Therefore there are $n-3$ common neighbours of $x_1$ and $x_2$ besides $y_i$, and $y_i$ is not adjacent to any of them. Hence $y_i$ has degree 2 in $G$, so by Lemma \ref{degree 2 then done} $G$ is isomorphic to $K_{2,n-2}$. But for large $n$ this has no vertex of degree 3, 4, or 5, which gives the required contradiction and completes the proof of the lemma.
\end{proof}

\begin{proof}[Proof of Theorem \ref{4-cycles}]
First, it is straightforward to see that every induced 4-cycle in $K_{2,n-2}$ contains exactly two vertices from the part of size $n-2$, and that there is a unique, distinct induced 4-cycle containing each such pair, so the number of induced 4-cycles in $K_{2,n-2}$ is $\binom{n-2}{2}=\frac{1}{2}(n^2-5n+6)$.

By Lemmas \ref{degree 2 then done} and \ref{no degree 3,4,5}, to prove the theorem it is sufficient to show that for large $n$ any $n$-vertex planar graph containing $f_I(n,C_4)$ induced 4-cycles has no vertex in fewer than $n-3$ induced 4-cycles. Indeed, if this is the case, then if $n$ is large and $G$ is an $n$-vertex planar graph containing $f_I(n,C_4)$ induced 4-cycles, then the minimum degree of $G$ is at most 5 by the planarity of $G$ and at least 2 since every vertex is in at least one induced 4-cycle. So by the lemmas $G$ is isomorphic $K_{2,n-2}$. 

Let $n$ be large and suppose that $G$ is an $n$-vertex planar graph in which every vertex of $G$ is in more than $n-3$ induced 4-cycles. Then as above the minimum degree of $G$ is at least 2 but at most 5, so by Lemmas \ref{degree 2 then done} and \ref{no degree 3,4,5} $G$ is isomorphic to $K_{2,n-2}$. But this contains vertices in at most $n-3$ induced 4-cycles, which is a contradiction. Thus for large $n$, every $n$-vertex planar graph has a vertex in at most $n-3$ induced 4-cycles.

For large $n$, deleting a vertex in at most $n-3$ induced 4-cycles from an $n$-vertex planar graph containing $f_I(n,C_4)$ induced 4-cycles yields an $(n-1)$-vertex planar graph containing at least $f_I(n,C_4)-n+3$ induced 4-cycles. This shows that $f_I(n,C_4)\leq f_I(n-1,C_4)+n-3$ for large $n$.

We have seen that there are exactly $\frac{1}{2}(n^2-5n+6)$ induced 4-cycles in $K_{2,n-2}$ for each $n$, so there are $n-3$ more induced 4-cycles in $K_{2,n-2}$ than there are in $K_{2,n-3}$. If $f_I(n,C_4)< f_I(n-1,C_4)+n-3$ for infinitely many values of $n$, then for large enough $n$ we have $\frac{1}{2}(n^2-5n+6) > f_I(n,C_4)$, which is a contradiction. Hence for large $n$, $f_I(n,C_4)=f_I(n-1,C_4)+n-3$. Therefore if $n$ is large enough that this holds, and $G$ is an $n$-vertex planar graph containing $f_I(n,C_4)$ induced 4-cycles but also containing a vertex in fewer than $n-3$ induced 4-cycles, then we can delete this vertex to obtain an $(n-1)$-vertex planar graph containing more than $f_I(n-1,C_4)$ induced 4-cycles, which is a contradiction. This completes the proof of the theorem.
\end{proof}

\section{Proof of Theorem \ref{5-cycles}}\label{5-cycles section}
\subsection{Preliminaries to the proof of Theorem \ref{5-cycles}}\label{Prelim sec}
We start with the following definition.
\begin{definition}
Two vertices in a graph $G$ are \emph{principal neighbours} if they are adjacent and there is an induced 5-cycle in $G$ containing both of them.
\end{definition}

We will use the following notation and result adapted from \cite{ghosh2020maximum}. Let $v$ be a vertex of a planar graph $G$ with distinct neighbours $u$ and $w$. Let $X^0_{uvw}=N(u)\setminus(N(w)\cup\{w\})$ and let $Y^0_{uvw}=N(w)\setminus(N(u)\cup\{u\})$. Then let $X_{uvw}$ be the set of vertices in $X^0_{uvw}$ which have a neighbour in $Y^0_{uvw}$, and similarly let $Y_{uvw}$ be the set of vertices in $Y^0_{uvw}$ which have a neighbour in $X^0_{uvw}$. The following important lemma is a small adaptation of Lemma 1 in \cite{ghosh2020maximum}. We include the proof here since it is short, provides intuition, and illustrates a method we will use repeatedly.

\begin{lemma}[\cite{ghosh2020maximum}]\label{X and Y}
Let $G$ be a planar graph with $v\in V(G)$ and $u,w\in N(v)$ such that $u\neq w$. Suppose that there is an induced 5-cycle in $G$ containing the path $uvw$. Define the sets $X_{uvw}$ and $Y_{uvw}$ as above. Then $G'$, the bipartite subgraph of $G$ induced by vertex classes $X_{uvw}$ and $Y_{uvw}$, is a non-empty forest. Moreover the number of induced 5-cycles in $G$ containing the path $uvw$ is at most $|E(G')|$, so in particular, there are at most $|X_{uvw}|+|Y_{uvw}|-1$ such cycles.
\end{lemma}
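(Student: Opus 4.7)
The plan is to prove the three assertions in turn: that $G'$ is non-empty, that the number of induced 5-cycles through the path $uvw$ is at most $|E(G')|$, and that $G'$ is a forest (from which $|E(G')|\leq|X_{uvw}|+|Y_{uvw}|-1$ is immediate since any forest on $k$ vertices has at most $k-1$ edges).

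First I would observe that any induced 5-cycle of $G$ containing the path $uvw$ has the form $uvwyx$, where the five vertices are distinct, $xu,wy,xy\in E(G)$, and $xw,yu\notin E(G)$ (so that the cycle is induced). These conditions force $x\in X^0_{uvw}$ and $y\in Y^0_{uvw}$, and the edge $xy$ then places $x\in X_{uvw}$ and $y\in Y_{uvw}$, with $xy\in E(G')$. Different induced 5-cycles through the fixed path $uvw$ yield different edges $xy$ of $G'$, so the number of such cycles is at most $|E(G')|$; and since by hypothesis at least one such cycle exists, $G'$ is non-empty.

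The bulk of the proof is to show that $G'$ is acyclic, by a planarity argument. Suppose for a contradiction that $G'$ contains a cycle, and let $C'=x_1y_1x_2y_2\cdots x_ky_kx_1$ be a shortest one, so $k\geq 2$. I would build a subdivision of $K_{3,3}$ in $G$ on sides $\{u,y_1,y_2\}$ and $\{w,x_1,x_2\}$. Seven of the nine required connections are single edges of $G$: the edges $ux_1,ux_2$ from $X_{uvw}\subseteq N(u)$; the edges $wy_1,wy_2$ from $Y_{uvw}\subseteq N(w)$; and the edges $x_1y_1,x_2y_1,x_2y_2$ from the cycle $C'$. The pair $uw$ is joined by the length-two path through $v$, and the pair $y_2x_1$ is joined either by the edge of $C'$ (when $k=2$) or by the complementary arc $y_2x_3y_3\cdots x_ky_kx_1$ of $C'$ (when $k\geq 3$).

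The main thing to check is that these nine paths are internally vertex-disjoint and meet the six branch vertices only at their endpoints. The crucial observation is that $v\in N(u)\cap N(w)$ while $X_{uvw}\cup Y_{uvw}\subseteq(N(u)\cup N(w))\setminus(N(u)\cap N(w))$, so $v$ is disjoint from $V(C')\cup\{u,w\}$ and collides with neither a branch vertex nor an internal vertex of the complementary-arc path. The internal vertices $x_3,y_3,\dots,x_k,y_k$ of that arc are in turn distinct from the six branch vertices, since the $2k$ vertices of $C'$ are distinct. This gives a $K_{3,3}$-subdivision in $G$, contradicting planarity; hence $G'$ is a forest on $|X_{uvw}|+|Y_{uvw}|$ vertices, which completes the proof. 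The delicate part of the argument is really this $K_{3,3}$ construction and the verification of internal vertex-disjointness across the two special paths.
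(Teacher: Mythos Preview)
Your proof is correct and follows essentially the same approach as the paper's: you establish the correspondence between induced 5-cycles through $uvw$ and edges of $G'$, then rule out cycles in $G'$ by exhibiting a $K_{3,3}$-subdivision on parts $\{u,y_1,y_2\}$ and $\{w,x_1,x_2\}$. The paper simply asserts the subdivision exists, whereas you spell out the nine connecting paths and verify internal vertex-disjointness (in particular that $v\notin X_{uvw}\cup Y_{uvw}$), which is a welcome addition of rigour but not a different idea.
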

\begin{proof}
Certainly any induced 5-cycle containing the path $uvw$ contains an edge of $G'$, and each of these edges is in at most one such cycle. Since there is an induced 5-cycle in $G$ containing the path $uvw$, the graph $G'$ is non-empty. Hence it's enough to show that $G'$ is acyclic. Indeed, suppose $G'$ contains a cycle $x_1y_1x_2\dots x_ky_k$ for some $k\geq 2$, $x_i\in X_{uvw}$, and $y_i\in Y_{uvw}$. Then $G$ contains a subdivision of $K_{3,3}$ with vertex classes $\{u,y_1,y_2\}$ and $\{w,x_1,x_2\}$ which is impossible since $G$ is planar.
\end{proof}

Our second preliminary lemma says that every drawing of a principal graph of the required form has a particular structure.

\begin{lemma}\label{drawing of required form}
Let $n\geq 19$, and let $G$ be an $n$-vertex principal graph of the required form. Given a drawing of $G$ and a labelling of the vertices of $G$ consistent with Definition \ref{reqd form def}, we may assume that the boundaries of the faces in the drawing consist of a fixed set of cycles, namely $u_1u_2u_3$, $u_1a_iwa_{i+1}$ for $1\leq i\leq |A|-1$, $u_2b_iwb_{i+1}$ for $1\leq i\leq |B|-1$, $u_3c_iwc_{i+1}$ for $1\leq i\leq |C|-1$, $u_1a_{|A|}wz_2$, $u_1z_1z_2$, $z_1z_2z_3$, $z_2z_3w$, $u_1u_2z_1$, $u_2z_1z_4$, $z_1z_3z_4$, $z_3z_4w$, $u_2z_4wb_1$, $u_2b_{|B|}wz_5$, $u_3z_5wc_1$, $u_2u_3z_5$, $u_3c_{|C|}wz_6$, $u_1z_6wa_1$, and $u_1u_3z_6$.
\end{lemma}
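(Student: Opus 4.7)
The plan is to identify a $3$-connected ``core'' subgraph whose planar embedding is essentially unique by Whitney's theorem, and then use the fact that every vertex of $A \cup B \cup C$ has degree exactly $2$ in $G$ (since $G$ is principal, so no optional edges are present) to show those vertices must lie in a prescribed face of the core. Let $H := G - (A \cup B \cup C)$; then $V(H) = \{u_1, u_2, u_3, w, z_1, \ldots, z_6\}$ and $H$ has $21$ edges, namely all of the edges of $G$ not incident to $A \cup B \cup C$.

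First I would verify that $H$ is $3$-connected. Since $H$ has only $10$ vertices, each of degree at least $3$, and is ``almost maximal'' planar (with $21$ of the at most $3\cdot 10 - 6 = 24$ allowable edges), this amounts to a finite case check over the $\binom{10}{2} = 45$ pairs of vertices. By Whitney's theorem it then follows that the set of facial cycles of $H$ is the same in every planar embedding. A direct calculation gives $F(H) = 21 - 10 + 2 = 13$ faces, and inspection of Figure \ref{reqd form} (with the $A, B, C$ vertices and their incident edges removed) identifies them as the ten triangles $u_1 u_2 u_3$, $u_1 u_2 z_1$, $u_1 z_1 z_2$, $z_1 z_2 z_3$, $z_2 z_3 w$, $u_2 z_1 z_4$, $z_1 z_3 z_4$, $z_3 z_4 w$, $u_2 u_3 z_5$, $u_1 u_3 z_6$, together with the three quadrilaterals $u_1 z_6 w z_2$, $u_2 z_4 w z_5$, and $u_3 z_5 w z_6$.

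Now in any planar drawing of $G$, each vertex $a \in A$ is drawn inside some face of the induced drawing of $H$, and since the only neighbours of $a$ in $G$ are $u_1$ and $w$, this face must have both $u_1$ and $w$ on its boundary. Of the $13$ faces of $H$ listed above, the only one with this property is the quadrilateral $u_1 z_6 w z_2$, so every vertex of $A$ lies inside that face. Analogously, every vertex of $B$ lies inside $u_2 z_4 w z_5$, and every vertex of $C$ lies inside $u_3 z_5 w z_6$. Inside $u_1 z_6 w z_2$ we have $|A|$ internally disjoint $u_1$--$w$ paths through the vertices of $A$, which together with the two boundary paths $u_1$--$z_6$--$w$ and $u_1$--$z_2$--$w$ are linearly ordered in the plane by planarity. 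Relabelling the elements of $A$ (a modification which is permitted by Definition \ref{reqd form def}) so that $a_1, \ldots, a_{|A|}$ appear in this order from the $z_6$-side to the $z_2$-side then subdivides the face into precisely the $|A|+1$ quadrilaterals $u_1 z_6 w a_1$, $u_1 a_i w a_{i+1}$ for $1 \leq i \leq |A| - 1$, and $u_1 a_{|A|} w z_2$; applying the same reasoning to $B$ and $C$ yields all the remaining face cycles listed in the lemma.

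The main obstacle is the $3$-connectivity verification for $H$: although finite, it is the only non-trivial input, and a single overlooked $2$-cut would invalidate the appeal to Whitney's theorem. Once $3$-connectivity is in hand, the rest of the argument is an essentially mechanical combination of Whitney's theorem, Euler's formula, and the elementary observation that inserting degree-$2$ vertices into a face of a planar embedding subdivides that face into a linearly ordered collection of sub-faces.
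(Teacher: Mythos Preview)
Your proof is correct, and it takes a genuinely different route from the paper's. The paper deletes the six vertices of $Z$ first, argues directly that any drawing of $G-Z$ (which is \emph{not} $3$-connected) has the expected face structure after relabelling within $A$, $B$, $C$, and then reinserts $z_1,\dots,z_6$ one at a time, noting at each step that the adjacencies force a unique target face. You instead delete $A\cup B\cup C$, observe that the resulting $10$-vertex graph $H$ is $3$-connected, and invoke Whitney's theorem to fix its $13$ facial cycles once and for all; the degree-$2$ vertices of $A$, $B$, $C$ then each land in the unique quadrilateral face whose boundary contains their two neighbours.

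Your approach is more conceptual: one appeal to Whitney replaces the step-by-step reinsertion of the $z_i$. The paper's approach is more elementary in that it avoids citing Whitney, at the cost of more hands-on face tracking. Both work.

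Your stated concern about the $3$-connectivity check is legitimate but easily dispatched. In the embedding of $H$ you describe, every face is a triangle or a quadrilateral, and in each quadrilateral the two diagonals ($\{u_i,w\}$ and one of $\{z_2,z_6\}$, $\{z_4,z_5\}$, $\{z_5,z_6\}$) are non-edges of $H$. Hence any adjacent pair $\{x,y\}$ lies on exactly the two faces flanking the edge $xy$ and nowhere else, while a quick scan shows no non-adjacent pair lies on two faces. Since a $2$-cut in a $2$-connected plane graph must lie on two faces not separated by a common boundary edge, there is no $2$-cut, and $H$ is $3$-connected. This replaces the raw $\binom{10}{2}$ case check with a short structural argument.
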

\begin{proof}
Consider the drawing of $G - Z$ induced by the drawing of $G$. It is straightforward to see that we can relabel the vertices within each of $A$, $B$, and $C$ such the faces of this drawing are bounded by the cycles $u_1u_2u_3$, $u_1a_iwa_{i+1}$ for $1\leq i\leq |A|-1$, $u_2b_iwb_{i+1}$ for $1\leq i\leq |B|-1$, $u_3c_iwc_{i+1}$ for $1\leq i\leq |C|-1$, $u_1a_{|A|}wb_1u_2$, $u_2b_{|B|}wc_1u_3$, and $u_3c_{|C|}wa_1u_1$.

It is clear which of these faces contains each of the vertices in $Z$ in the drawing of $G$. One by one, add the vertices in $Z$ (and their edges to the current graph) back to the drawing of $G- Z$ in the order $z_1,\dots,z_6$, keeping track of the faces of the drawing and their boundaries at each step. At each step there is a unique face to which the next vertex can be added, and the cycles forming the boundaries of the faces in the resulting drawing are fixed. After adding $z_6$, the cycles forming the boundaries of the faces are those in the list in the statement of the lemma. Clearly this new labeling of the vertices is still consistent with Definition \ref{reqd form def}.
\end{proof}

Finally, we count the induced 5-cycles in a principal graph of the required form.
\begin{lemma}\label{5-cycle count}
Let $n\geq 19$. Every principal $n$-vertex graph of the required form contains exactly $\frac{1}{3}(n^2-8n+22)$ induced 5-cycles if $n\equiv 1\pmod{3}$, and exactly $\frac{1}{3}(n^2-8n+21)$ otherwise. Moreover, every $n$-vertex graph of the required form contains at least as many induced 5-cycles as a principal $n$-vertex graph of the required form.
\end{lemma}
\begin{proof}
Let $G$ be a principal $n$-vertex graph of the required form, and label its vertices as in Definition \ref{reqd form def}. Let $a\in A$, then the only neighbours of $a$ are $u_1$ and $w$. Define $X=X_{u_1aw}$ and $Y=Y_{u_1aw}$ as above, then the number of induced 5-cycles in $G$ containing $a$ is equal to the number of edges between these two sets. We see that $X=\{u_2,u_3,z_1\}$ and $Y=B\cup C\cup \{z_3,z_4,z_5\}$, and the number of edges between these sets is $|B|+|C|+5$. None of these 5-cycles contain another vertex in $A$, so there are $|A|(|B|+|C|+5)$ induced 5-cycles containing a vertex in $A$.

Now consider $G-A$. By the same method, we see that each vertex in $B$ is in $|C|+5$ induced 5-cycles in $G-A$, and none of these use another vertex in $B$, so there are $|B|(|C|+5)$ induced 5-cycles in $G-A$ containing a vertex in $B$. Applying the method once more to the graph $G-(A\cup B)$ and a vertex in $C$, we find that there are $2|C|$ induced 5-cycles in that graph containing a vertex in $C$.

It remains to count the number of induced 5-cycles in $G[\{u_1,u_2,u_3,w\}\cup Z]$. Let $\Gamma$ be an induced 5-cycle in this graph and suppose that it does not contain $w$. Then the only available neighbours of $z_5$ and $z_6$ are $u_2$ and $u_3$, and $u_1$ and $u_3$ respectively, but both these pairs are adjacent, so neither $z_5$ nor $z_6$ are in $\Gamma$. Then similarly $u_3$ is not in $\Gamma$. Now $z_1$ is adjacent to all the remaining vertices, so it too is not in $\Gamma$. Thus $\Gamma$ is the induced 5-cycle $u_1z_2z_3z_4u_2$.

Now suppose $\Gamma$ contains $w$ and $z_5$. There are exactly five such cycles (two containing the path $wz_5u_2u_1$, two containing $wz_5u_2z_1$, one containing $wz_5u_3u_1$, and none containing any other path of length 4 extending $wz_5$ beyond $z_5$). By a similar count, there are exactly four induced 5-cycles containing $w$ and $z_6$ but not $z_5$. If $\Gamma$ contains $w$ but not $z_5$ or $z_6$, then it must contain $z_2$ and $z_4$, and we see the only such cycle is $u_1z_2wz_4u_2$.

So in total there are exactly $|A\frac{1}{3}(n^2-8n+21)||B|+|A||C|+|B||C|+5(|A|+|B|)+2|C|+11$ induced 5-cycles in $G$, which is $\frac{1}{3}(n^2-8n+22)$ if $n\equiv 1\pmod{3}$ and $\frac{1}{3}(n^2-8n+21)$ otherwise. Finally, we see from this count that no induced 5-cycle in a principal graph of the required form contains two vertices which are the endpoints of an optional edge. Hence every $n$-vertex graph of the required form contains at least as many induced 5-cycles as the principal $n$-vertex graph of the required form that it contains.
\end{proof}

\subsection{Two key lemmas in the proof of Theorem \ref{5-cycles}}
The following two lemmas are the two key steps in the proof of Theorem \ref{5-cycles}.
\begin{lemma}\label{key lemma 1}
Let $n$ be large, and suppose that $G$ is an $n$-vertex planar graph containing $f_I(n,C_5)$ induced 5-cycles. Suppose also that every vertex of $G$ is in at least $\floor{\frac{2(n-1)}{3}}-2$ induced 5-cycles. Then it contains distinct vertices $u_1$, $u_2$, $u_3$, $a$, $b$, $c$, and $w$, such that
\begin{enumerate}
    \item $u_1$, $u_2$, and $u_3$ are all adjacent to one another but none are adjacent to $w$, and
    \item the principal neighbours of $a$, $b$, and $c$ are exactly $u_1$ and $w$, $u_2$ and $w$, and $u_3$ and $w$ respectively.
\end{enumerate}  
\end{lemma}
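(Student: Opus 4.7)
The plan is to build the desired seven-vertex configuration by starting from a single low-degree vertex supplied by planarity and then unpacking the structure forced by Lemma \ref{X and Y}. Since $G$ is planar, Euler's formula supplies a vertex $v$ with $d(v)\le 5$ (and $d(v)\ge 2$, since $v$ lies in at least one induced 5-cycle). By hypothesis $v$ lies in at least $\lfloor 2(n-1)/3\rfloor-2$ induced 5-cycles, and each such cycle contains a path $xvy$ with $x,y\in N(v)$. There are at most $\binom{5}{2}=10$ such paths, so by pigeonhole some path $xvy$ lies in $\Omega(n)$ induced 5-cycles. Applying Lemma \ref{X and Y} to $xvy$ yields a bipartite forest $F$ on classes $X:=X_{xvy}$ and $Y:=Y_{xvy}$ with $|E(F)|=\Omega(n)$, so in particular $|X|+|Y|=\Omega(n)$.

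The intended identification is $w:=y$ and $u_1:=x$, with $u_2,u_3$ chosen as two ``hub'' vertices of $X\setminus\{x\}$, each having $\Omega(n)$ neighbors in $Y$. To locate such hubs I exploit the hypothesis that every vertex $q\in Y$ is itself in at least $\lfloor 2(n-1)/3\rfloor-2$ induced 5-cycles: a large fraction of the 5-cycles through a typical $q$ must cross back through $F$ via an $X$-neighbor of $q$ other than $x$, and combining this with the tree inequality $|E(F)|\le |X|+|Y|-1$ via double counting forces the $X$-degree in $F$ to concentrate on exactly two vertices of $X\setminus\{x\}$. The representative vertices $a,b,c$ are then chosen as degree-$2$ vertices of $G$ serving as leaves of $F$ attached only to $x$, $u_2$, and $u_3$ respectively; such a vertex has exactly $\{u_i,w\}$ as its neighbors in $G$, and both of these are principal because the vertex already participates in $\Omega(n)$ induced 5-cycles. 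Since $F$ has $\Omega(n)$ vertices but only $O(1)$ non-leaves besides the hubs, leaves of the required kind exist in abundance.

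The main obstacle, and the most delicate step, is verifying the triangle $u_1u_2u_3$. The edges $u_1u_2$ and $u_1u_3$ come for free since $u_2,u_3\in X\subseteq N(u_1)$, and each $u_i$ is non-adjacent to $w$ by the very definition of $X$. The delicate edge is $u_2u_3$. I plan to argue for it by contradiction. Suppose $u_2\not\sim u_3$. By the forest property of $F$, the neighbor sets $N_Y(u_2)$ and $N_Y(u_3)$ share at most one vertex, so they are essentially disjoint. Hence any induced 5-cycle containing both hubs must use an external common neighbor or a long detour, which limits the number of such cycles to $O(n)$. Comparing the resulting overall induced 5-cycle count in $G$ against the count in a graph of the required form --- which, by the edge $u_2u_3$, enjoys an extra $|B|\cdot|C|=\Omega(n^2)$ induced 5-cycles of the form $u_2bwcu_3$ --- shows that $G$ must have fewer than $f_I(n,C_5)$ induced 5-cycles, contradicting our assumption. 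Setting up this count comparison carefully, and ruling out that parasitic induced 5-cycles elsewhere in $G$ could compensate for the deficit (again via repeated applications of Lemma \ref{X and Y}), is where the bulk of the effort lies and is what I expect to be the most challenging portion of the argument.
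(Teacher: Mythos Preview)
Your proposal has a genuine structural gap in how $a$, $b$, $c$ are produced, and a second gap in how the triangle $u_1u_2u_3$ is obtained.

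First, the construction of $a$ does not make sense as written. You set $u_1:=x$ and $w:=y$, with $F$ the bipartite forest on $X=X_{xvy}\subseteq N(x)\setminus N(w)$ and $Y=Y_{xvy}\subseteq N(w)\setminus N(x)$. A vertex with principal neighbourhood exactly $\{u_1,w\}=\{x,w\}$ must lie in $N(x)\cap N(w)$, and hence is in neither $X$ nor $Y$; it is not a leaf of $F$ at all, let alone one ``attached only to $x$''. So your source of the vertex $a$ has evaporated. More broadly, nothing in your outline guarantees the existence of \emph{any} vertex whose principal neighbourhood is exactly $\{u_i,w\}$; claiming that such vertices arise as ``degree-$2$ leaves'' conflates degree in $F$ with degree in $G$ and is not justified. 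The paper handles this via a different device: it quotes from \cite{ghosh2020maximum} the notion of an \emph{empty $K_{2,7}$} (Lemmas~\ref{Ghosh lemma 2} and~\ref{Ghosh Cor 1}), whose central vertex automatically has principal neighbourhood exactly the two-vertex side. This is what produces $a$, and then $b$ and $c$ in later iterations.

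Second, your route to the edge $u_2u_3$ is too optimistic. You assert that a double count forces the $X$-degree in $F$ to concentrate on exactly two vertices of $X$, but the forest inequality $|E(F)|\le |X|+|Y|-1$ does not by itself imply any such concentration; it is compatible with many medium-degree vertices in $X$. The paper spends the bulk of the proof (its Claim~\ref{kl2 claim}) just to show that $F$ has maximum degree at least $n^{5/6}$, using a contradiction argument that bounds the total induced-$C_5$ count by $\tfrac{2}{9}n^2+o(n^2)$ otherwise. Having found one hub $u_2\in X$, it then rebuilds the whole $X',Y'$ structure around $u_2,b,w$ and extracts $u_3\in X\cap X'$; the edge $u_2u_3$ then comes for free because $u_3\in X'\subseteq N(u_2)$. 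Your proposed global-count contradiction for $u_2\not\sim u_3$ would require controlling \emph{all} induced 5-cycles in $G$, not just those through the hubs, and you have not indicated how to do that.

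In short, the missing idea is the empty $K_{2,7}$ mechanism (borrowed from \cite{ghosh2020maximum}) for manufacturing vertices with a prescribed two-element principal neighbourhood, together with iterating the $X,Y$ construction to get $u_3$ adjacent to $u_2$ structurally rather than by counting.
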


\begin{lemma}\label{key lemma 2}
Let $n\geq 19$, and suppose that $G$ is an $n$-vertex planar graph containing $f_I(n,C_5)$ induced 5-cycles. Suppose also that it contains distinct vertices $u_1$, $u_2$, $u_3$, $a$, $b$, $c$, and $w$ satisfying the conditions in Lemma \ref{key lemma 1}. Then there exists a vertex of $G$ in at most $\floor{\frac{2(n-1)}{3}}-2$ induced 5-cycles. If moreover $n\equiv 1\pmod{3}$ and every vertex of $G$ is in at least $\floor{\frac{2(n-1)}{3}}-2=\frac{2n-8}{3}$ induced 5-cycles, then $G$ is of the required form and contains exactly $\frac{1}{3}(n^2-8n+22)$ induced 5-cycles.
\end{lemma}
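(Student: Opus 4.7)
My plan is to extract the structure of Definition \ref{reqd form def} from the seven-vertex anchor provided by Lemma \ref{key lemma 1}. Set $A=(N(u_1)\cap N(w))\setminus\{u_2,u_3\}$, $B=(N(u_2)\cap N(w))\setminus\{u_1,u_3\}$, and $C=(N(u_3)\cap N(w))\setminus\{u_1,u_2\}$. I would first show these sets are pairwise disjoint: a vertex $x\in A\cap B$ would, together with $u_1,u_2,w$ and the witnessing 5-cycles at $a$ and $b$, yield a $K_{3,3}$-subdivision, contradicting planarity. Letting $Z=V(G)\setminus(\{u_1,u_2,u_3,w\}\cup A\cup B\cup C)$, I would bound $|Z|$ by an absolute constant, using that any $z\in Z$ avoids at least one of the pairs $\{u_i,w\}$, so its adjacencies to $\{u_1,u_2,u_3,w\}$ are restricted, and the lower bound on 5-cycles through $z$ forces $z$ to lie in a bounded neighbourhood of the anchor.

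The main counting tool is Lemma \ref{X and Y}. For $v\in A$, apply it to the path $u_1vw$: planarity together with the triangle $u_1u_2u_3$ forces $X_{u_1vw}\subseteq\{u_2,u_3\}\cup Z$ and $Y_{u_1vw}\subseteq B\cup C\cup Z$. Since $a$'s only principal neighbours are $u_1$ and $w$, every induced 5-cycle through $a$ uses the path $u_1aw$; for a generic $v\in A$ the same holds up to a bounded error. The forest bound then yields at most $|B|+|C|+O(1)$ induced 5-cycles through $v$, with analogous bounds for $v\in B,C$. For each $v\in Z\cup\{u_1,u_2,u_3,w\}$, similar forest arguments applied to appropriate length-2 paths give $O(n)$ induced 5-cycles through $v$.

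Now compare with $5f_I(n,C_5)\geq 5\cdot\tfrac{1}{3}(n^2-8n+22)$, guaranteed by Lemma \ref{5-cycle count} applied to the principal required form. Summing the per-vertex bounds over $A\cup B\cup C$ and the bounded residue, equality forces $|A|,|B|,|C|$ to each be close to $n/3$ and each forest produced by Lemma \ref{X and Y} to essentially saturate. For part (i), since the bipartite subgraph in Lemma \ref{X and Y} is a forest rather than a complete bipartite graph, at least one endpoint vertex in the forest arising from the smallest of $A,B,C$ loses enough candidate 5-cycles to fall below $\floor{\frac{2(n-1)}{3}}-2$. For part (ii) with $n\equiv 1\pmod{3}$, the lower-bound hypothesis forces equality everywhere: $|A|=|B|=\frac{n-7}{3}$, $|C|=\frac{n-16}{3}$, each of $A,B,C$ is traversed by a Hamiltonian path in its forest (so it is arranged as in Definition \ref{reqd form def}), and combining with Lemma \ref{drawing of required form} pins down the ambient drawing to that of a principal graph of the required form.

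The main obstacle is identifying $Z$ exactly: showing $|Z|=6$ and that $G[Z\cup\{u_1,u_2,u_3,w\}]$ contains precisely the 21 edges listed in point 2 of Definition \ref{reqd form def}. This requires a careful planarity-plus-counting case analysis of how $Z$-vertices can attach to the endpoints $a_1, a_{|A|}, b_1, b_{|B|}, c_1, c_{|C|}$ and to each other, ruling out alternative configurations while preserving both planarity and the tight per-vertex 5-cycle counts. I expect this to be the part deferred to the appendix-style casework mentioned in the paper's outline.
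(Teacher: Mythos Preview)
Your plan has two structural problems that block part (i) of the lemma.

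\textbf{The sets $A,B,C$ as you define them need not be disjoint.} Take $G$ to be a graph of the required form and look at the vertex $z_5$ from Definition~\ref{reqd form def}: it is adjacent to $u_2$, $u_3$, and $w$, so with your definition $z_5\in B\cap C$. No $K_{3,3}$-subdivision is forced. The paper instead defines $A$ as the set of vertices whose \emph{principal} neighbours are exactly $u_1$ and $w$ (and analogously for $B$, $C$). This guarantees disjointness automatically and, crucially, ensures that \emph{every} induced $5$-cycle through a vertex $v\in A$ uses the path $u_1vw$, with no ``up to a bounded error'' caveat. With your common-neighbourhood definition that caveat is real: vertices like $z_2$ land in your $A$ but participate in induced $5$-cycles (such as $u_1z_2z_3z_4u_2$) that avoid $w$ entirely.

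\textbf{Your bound on $|Z|$ is circular for part (i).} You invoke ``the lower bound on $5$-cycles through $z$'' to force $z$ into a bounded neighbourhood of the anchor, but part~(i) has no per-vertex lower bound hypothesis; the only assumption is the global count $f_I(n,C_5)$. Without a bound on $|Z|$, your forest estimate for a vertex of $A$ is $|B|+|C|+O(|Z|)$, not $|B|+|C|+O(1)$, and the argument collapses.

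The idea you are missing is that the paper never bounds $|Z|$ for part~(i). It works only with the three anchors $a,b,c$, whose principal neighbours are known exactly. Writing $k_1,k_2,k_3$ for the number of induced $5$-cycles through $a,b,c$ that use a vertex of $Z$, one has
\[
T:=\sum_{x\in\{a,b,c\}}\#\{\text{induced }C_5\text{ through }x\}=2(|A|+|B|+|C|)+(k_1+k_2+k_3)=2(n-|Z|-4)+(k_1+k_2+k_3).
\]
A planarity-and-forest argument (analysing how vertices of $Z$ sit inside the three pentagonal faces $F_1,F_2,F_3$ of the drawing of $H-Z$) shows $k_1+k_2+k_3\le 2|Z|$. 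The $|Z|$ terms cancel, giving $T\le 2n-8$, and hence one of $a,b,c$ is in at most $\lfloor\tfrac{2(n-1)}{3}\rfloor-2$ induced $5$-cycles. This cancellation is the heart of part~(i), and nothing in your outline produces it. Your appendix-style casework for the $Z$-structure in part~(ii) is on the right track, but it should be organised around the equality case $k_1+k_2+k_3=2|Z|$ in the paper's inequality rather than around an a priori bound on $|Z|$.
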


\subsection{Proof of Theorem \ref{5-cycles} given the key lemmas}
Before proving Lemmas \ref{key lemma 1} and \ref{key lemma 2} we use them to prove Theorem \ref{5-cycles}.

\begin{proof}[Proof of Theorem \ref{5-cycles}]
Let $n$ be large, and let $G$ be an $n$-vertex planar graph containing $f_I(n,C_5)$ induced 5-cycles. By Lemmas \ref{key lemma 1} and \ref{key lemma 2}, some vertex of $G$ is in at most $\floor{\frac{2(n-1)}{3}}-2$ induced 5-cycles. By deleting such a vertex from $G$ we find that $f_I(n,C_5)\leq f_I(n-1,C_5)+\floor{\frac{2(n-1)}{3}}-2$ for large $n$.

Let $c(n)$ denote the number of induced 5-cycles in an $n$-vertex principal graph of the required form. By Lemma \ref{5-cycle count} this is well-defined and satisfies $c(n)=c(n-1)+\floor{\frac{2(n-1)}{3}}-2$ for all $n\geq 20$. Suppose that the inequality at the end of the last paragraph is strict for infinitely many values of $n$. Then for large enough $n$, we have $c(n)> f_I(n,C_5)$ which is a contradiction since graphs of the required form are planar. Hence for large $n$, $f_I(n,C_5) = f_I(n-1,C_5)+\floor{\frac{2(n-1)}{3}}-2$.

Therefore, for large $n$, if $G$ is an $n$-vertex planar graph containing $f_I(n,C_5)$ induced 5-cycles, then every vertex of $G$ is in at least $\floor{\frac{2(n-1)}{3}}-2$ induced 5-cycles, otherwise we could delete the vertex in the fewest induced 5-cycles to obtain an $(n-1)$-vertex planar graph containing more than $f_I(n-1,C_5)$ induced 5-cycles. Hence by Lemmas \ref{key lemma 1} and \ref{key lemma 2}, if $n$ is large with $n\equiv 1\pmod{3}$, and $G$ is an $n$-vertex planar graph containing $f_I(n,C_5)$ induced 5-cycles, then $G$ is of the required form. Moreover, for such $n$ we have $f_I(n,C_5)=\frac{1}{3}(n^2-8n+22)$. Hence for large $n$ with $n\equiv 0,2 \pmod{3}$ we have $f_I(n,C_5)=\frac{1}{3}(n^2-8n+21)$, and so by Lemma \ref{5-cycle count}, for large enough $n$ every $n$-vertex graph of the required form contains $f_I(n,C_5)$ induced 5-cycles. To complete the proof of the theorem it is sufficient to show that if $n$ is large with $n\equiv 0,2 \pmod{3}$, and $G$ is an $n$-vertex planar graph containing $f_I(n,C_5)$ induced 5-cycles, then $G$ is of the required form.

\begin{claim}
Let $n$ be large, with $n\equiv 0\pmod{3}$ or $n\equiv 2\pmod{3}$, and let $G$ be an $n$-vertex planar graph containing $f_I(n,C_5)$ induced 5-cycles. Suppose that $G$ contains an $(n-1)$-vertex graph of the required form, $H$, as an induced subgraph. Then $G$ is of the required form.
\end{claim}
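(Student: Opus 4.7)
The plan is to exploit the strong structural information from Lemma \ref{drawing of required form} together with the extremality of $G_1$ to pin down the position and neighbourhood of the single extra vertex $v\in V(G_1)\setminus V(G)$, ultimately forcing it to extend one of the classes $A$, $B$, or $C$ of $G$.

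I would begin by computing the exact number of induced $5$-cycles of $G_1$ that contain $v$. Lemma \ref{5-cycle count} gives an explicit count of the induced $5$-cycles in $G$ depending only on the residue of $n-1$ modulo $3$, and combining this with the recursion
\[
f_I(n,C_5)=f_I(n-1,C_5)+\left\lfloor\tfrac{2(n-1)}{3}\right\rfloor-2
\]
established earlier in the proof of Theorem \ref{5-cycles} yields that $v$ lies in exactly $\lfloor 2(n-1)/3\rfloor-2$ induced $5$-cycles of $G_1$, in both residues $n\equiv 0,2\pmod 3$. Next, I would fix a planar drawing of $G_1$ and restrict it to the principal subgraph $G_0\subseteq G$ obtained by deleting the optional edges of $G$. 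Lemma \ref{drawing of required form} describes the faces of any drawing of $G_0$ up to relabelling, and the optional edges of $G$ appear as chords inside some of these faces. Hence $v$ lies in the interior of some face $F$ of the drawing of $G_0$, and every neighbour of $v$ in $G$ lies on $\partial F$; in particular $v$ has at most four neighbours in $G$, all on a triangular or quadrilateral cycle from the explicit list in Lemma \ref{drawing of required form}.

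The main step is then a case analysis over the face $F$ and the subset $N_{G_1}(v)\subseteq V(\partial F)$. For each pair $\{p,q\}\subseteq N_{G_1}(v)$ with $pq\notin E(G_1)$, Lemma \ref{X and Y} bounds the number of induced $5$-cycles containing the path $pvq$ by $|X_{pvq}|+|Y_{pvq}|-1$, while pairs $\{p,q\}$ with $pq\in E(G_1)$ contribute nothing through $v$. Using the explicit neighbourhoods of the vertices of $G$ from Definition \ref{reqd form def}, the resulting totals are made explicit in each case; mimicking the computation in Lemma \ref{5-cycle count}, the only totals reaching the target $\lfloor 2(n-1)/3\rfloor-2$ are $|B_G|+|C_G|+5$, $|A_G|+|C_G|+5$ and $|A_G|+|B_G|+2$, achieved exactly when $N_{G_1}(v)\cap V(G)$ contains the two non-adjacent vertices $\{u_i,w\}$ for some $i\in\{1,2,3\}$ (possibly together with a chain neighbour contributing an optional edge), and $F$ is a face $u_ix_jwx_{j+1}$ on the corresponding `$i$-th side' chain in $A\cup\{z_2\}$, $B\cup\{z_4,z_5\}$, or $C\cup\{z_5,z_6\}$. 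Every other face or choice of $N_{G_1}(v)$ yields a strictly smaller count and is excluded.

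Finally, in each surviving case $v$ slots between $x_j$ and $x_{j+1}$ on the corresponding chain, producing an enlarged class whose sizes match one of the options listed in Definition \ref{reqd form def}(1)(b) or (1)(c) for the appropriate residue, and one checks directly that every required edge of Definition \ref{reqd form def}(2) is present in $G_1$ and every remaining edge of $G_1$ appears in the list of optional edges of Definition \ref{reqd form def}(3); hence $G_1$ is of the required form. The principal obstacle is the case check itself: the transition $4$-faces near the $z$-vertices and all subsets of their boundary vertices must be inspected individually, and one must separately verify that any optional edges of $G$ which subdivide $F$ neither enlarge the possible neighbourhood of $v$ nor alter the $5$-cycle counts above.
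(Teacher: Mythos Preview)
Your plan is sound and follows the same high-level strategy as the paper --- locate $v$ in a face of the principal subgraph via Lemma~\ref{drawing of required form} and pin down its neighbours --- but the execution differs in two places worth noting.

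First, where you propose a full case analysis over all faces $F$ and all subsets $N_{G_1}(v)\subseteq V(\partial F)$, the paper shortcuts this substantially. It observes that every non-triangular face of the principal drawing is a $4$-cycle $u_iywy'$ whose only non-adjacent pairs are the diagonals $\{u_i,w\}$ and $\{y,y'\}$; an induced $5$-cycle through $v$ must use one of these two pairs as the neighbours of $v$ in the cycle. Since every vertex of $A\cup B\cup C\cup Z$ has degree at most $7$ in $G$ (which the paper checks from the face list), the pair $\{y,y'\}$ can support only $O(1)$ induced $5$-cycles by Lemma~\ref{X and Y}, so for large $n$ the vertex $v$ must be adjacent to both $u_i$ and $w$. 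This replaces your entire subset case-check by a two-line argument.

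Second, your treatment of the residue $n\equiv 0\pmod 3$ is in fact more uniform than the paper's. Because you compute that $v$ lies in \emph{exactly} $\lfloor 2(n-1)/3\rfloor-2$ induced $5$-cycles (this uses $f_I(n-1,C_5)=c(n-1)$, which is available at the point the claim is invoked), the three candidate counts $|B_G|+|C_G|+5$, $|A_G|+|C_G|+5$, $|A_G|+|B_G|+2$ automatically rule out the one value of $i$ that would give class sizes outside Definition~\ref{reqd form def}(1)(c). The paper instead argues separately: the class whose vertices lie in only $\tfrac{2n}{3}-4$ cycles in $G$ must each gain a cycle through $v$, and then a path-length argument (e.g.\ $a_3$ and $a_9$ cannot both be within distance $3$ of $v$ via paths avoiding $u_1$ and $w$) shows $v$ does not land on that side. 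Your approach buys uniformity across residues; the paper's buys a much shorter first step. One small slip: the $A$-side chain of $4$-faces runs through $A\cup\{z_2,z_6\}$, not $A\cup\{z_2\}$.
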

\begin{proof}
Let $v$ be the vertex we delete from $G$ to obtain $H$. Let $H_0$ be a principal graph of the required form on $n-1$ vertices contained in $H$. Fix a drawing of $G$ and consider the induced drawings of $H$ and $H_0$. By Lemma \ref{drawing of required form} we can label the vertices of $H$ according to Definition \ref{reqd form def} such that the boundaries of the faces in the drawing of $H_0$ consist of the cycles listed in that lemma.

The only non-triangular faces in the drawing of $H$ have a boundary consisting of a cycle from the list in Lemma \ref{drawing of required form} of the form $u_iywy'$ for some $i\in \{1,2,3\}$ and $y,y'\in A\cup B\cup C\cup Z$. We know that $v$ is contained in at least $\floor{\frac{2(n-1)}{3}}-2$ induced 5-cycles in $G$ so it cannot be contained in a triangular face of $H$. Let the face of $H$ which contains $v$ be bounded by cycle $u_iywy'$, with $i$, $y$, and $y'$ as above. Both $y$ and $y'$ have degree at most 5 in $H$ so by Lemma \ref{X and Y}, since $n$ is large, these cannot be the only principal neighbours of $v$ in $G$. Hence $v$ is adjacent to $u_i$ and $w$. Its only other possible neighbours are $y$ and $y'$, so by the planarity of $G$ it is now clear that if $n\equiv 2\pmod{3}$, then $G$ is of the required form.

If $n\equiv 0\pmod{3}$, then there is some $i\in \{1,2,3\}$ for which this does not imply that $G$ is of the required form. If $n\equiv 0\pmod{3}$, then by the proof of Lemma \ref{5-cycle count}, one of the vertex classes $A$, $B$, and $C$ in $H$ is such that all the vertices in that class are in exactly $\floor{\frac{2(n-2)}{3}}-2=\frac{2n}{3}-4$ induced 5-cycles in $H$. This is strictly less than $\floor{\frac{2(n-1)}{3}}-2=\frac{2n}{3}-3$ and hence each vertex in that class must be in an induced 5-cycle containing $v$ in $G$. Suppose that $A$ is the class to which this applies (similar arguments hold for $B$ and $C$). Then it is enough to show that $i\neq 1$ in the argument above.

Suppose for a contradiction that $v$ is adjacent to $u_1$ and $w$ in $G$. Vertex $a_3$ is in an induced 5-cycle containing $v$ in $G$, and this 5-cycle cannot contain both $u_1$ and $w$, so there is a path of length at most 3 from $a_3$ to $v$ which avoids $u_1$ and $w$. Using the list of cycles forming the boundaries of the faces in the drawing of $H_0$, we can deduce that the only vertices to which there is a path in $H$ of length at most 2 which avoids $u_1$ and $w$ are $a_1$, $a_2$, $a_3$, $a_4$, and $a_5$. Thus $v$ is adjacent to one of these five vertices in $G$. Similarly, since $a_9$ is an induced 5-cycle containing $v$ in $G$, $v$ is adjacent to one of $a_7$, $a_8$, $a_9$, $a_{10}$, and $a_{11}$. However using the list of the boundaries of the faces in the drawing of $H_0$ again, we see that this is impossible. Hence $i\neq 1$ as required, and if $n\equiv 0\pmod{3}$, then $G$ is of the required form.
\end{proof}

To conclude, let $n$ be large with $n\equiv 2\pmod{3}$, and let $G$ be an $n$-vertex planar graph containing $f_I(n,C_5)$ induced 5-cycles. Then there is a vertex in exactly $\floor{\frac{2(n-1)}{3}} - 2$ induced 5-cycles in $G$, and deleting this vertex gives an $(n-1)$-vertex planar graph containing $f_I(n-1,C_5)$ induced 5-cycles. Since $n-1\equiv 1\pmod{3}$, this graph is of the required form. Hence by the claim, $G$ is of the required form. Repeating this argument, we can extend this to $n\equiv 0 \pmod{3}$. This completes the proof of the theorem.
\end{proof}

\section{Proof of Lemma \ref{key lemma 1}}\label{key lemma 1 sect}
Following the authors of \cite{ghosh2020maximum}, we say that in a drawing of a planar graph $G$, an \emph{empty $K_{2,7}$} is a $K_{2,7}$ subgraph of $G$ with parts $\{a_1,a_2\}$ and $\{b_1,\dots,b_7\}$, with $b_1,\dots,b_7$ labelled in a natural order in the drawing, such that in the drawing of $G$ the bounded region with boundary $a_1b_1a_2b_7$ contains exactly the vertices $b_2,\dots,b_6$. Empty $K_{2,7}$'s will be useful in the proof of Lemma \ref{key lemma 1} since, with notation as above, $a_1$ and $a_2$ are the only principal neighbours of $b_4$ in the graph. Indeed, the only other possible neighbours of $b_4$ are $b_3$ and $b_5$, but there is no path of length 3 from $b_3$ to $b_5$ avoiding $a_1$, $a_2$, and $b_4$, so no induced 5-cycle contains the path $b_3b_4b_5$. Also, no induced 5-cycle contains the path $a_ib_4b_j$ for $i\in\{1,2\}$ and $j\in\{3,5\}$ since $a_i$ and $b_j$ are neighbours. Similarly, $a_1$ and $a_2$ are the only principal neighbours of $b_2$, $b_3$, $b_5$, and $b_6$.

The proof of Lemma \ref{key lemma 1} will use the following three results, proved in \cite{ghosh2020maximum} as Lemma 2, Lemma 4, and Corollary 1 respectively (a small correction to their Corollary 1 was given in an updated version of their paper, see \cite{ghosh2021maximumv3}).

\begin{lemma}[\cite{ghosh2020maximum}]\label{Ghosh lemma 2}
Let $n$ be large and let $G$ be an $n$-vertex plane graph in which every vertex is in more than $\frac{11n}{20}$ induced 5-cycles. Then $G$ contains an empty $K_{2,7}$.
\end{lemma}

\begin{lemma}[\cite{ghosh2020maximum}]\label{Ghosh lemma 4}
Let $n$ be large and let $G$ be an $n$-vertex plane graph in which every vertex is in more than $\frac{11n}{20}$ induced 5-cycles. Let $u$ and $w$ be distinct vertices of $G$, and let $v_1,\dots,v_6$ be some of their common neighbours, labelled in a natural order in the drawing of $G$. Suppose that the interior of the bounded region with boundary formed of the cycle $uv_3wv_4$ contains no common neighbours of $u$ and $w$. Then if this region contains a vertex, then it contains at least $n^{1/5}$ vertices.
\end{lemma}

\begin{lemma}[\cite{ghosh2020maximum}, \cite{ghosh2021maximumv3}]\label{Ghosh Cor 1}
Let $n$ be large and let $G$ be an $n$-vertex plane graph in which every vertex is in more than $\frac{11n}{20}$ induced 5-cycles. If $u$ and $w$ are distinct vertices of $G$ with $|N(u)\cap N(w)|\geq 7n^{4/5}$, then $G$ contains an empty $K_{2,7}$ whose part of size 2 is $\{u,w\}$.
\end{lemma}

We now prove Lemma \ref{key lemma 1}. The proof is a very slight adaptation of that of Lemmas 5 and 6 from \cite{ghosh2020maximum}, but we repeat it here for completeness.

\begin{proof}[Proof of Lemma \ref{key lemma 1}]
Fix a drawing of $G$. By Lemma \ref{Ghosh lemma 2} this drawing contains an empty $K_{2,7}$. Let $u$ and $w$ be the vertices in the part of size 2, and let $v$ be the `central' vertex of the seven in the other part. Then $u$ and $w$ are the only principal neighbours of $v$, so in particular $u$ and $w$ are not adjacent. Let $X=X_{uvw}$ and $Y=Y_{uvw}$ be defined as in Section \ref{Prelim sec}. Then by Lemma \ref{X and Y} we have $|X|+|Y|\geq \floor{\frac{2(n-1)}{3}}-1$. Let $G'$ be the induced bipartite graph between $X$ and $Y$.

\begin{claim}\label{2 nbrs in X}
No vertex in $V(G)\setminus(Y\cup\{u\})$ is adjacent to more than two vertices in $X$, and no vertex in $V(G)\setminus(X\cup\{w\})$ is adjacent to more than two vertices in $Y$.
\end{claim}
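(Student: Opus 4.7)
My approach is a $K_{3,3}$-minor argument: I will assume for contradiction that some vertex $p$ witnesses a failure of the first statement of the claim, then exhibit an explicit $K_{3,3}$ minor in $G$ to contradict planarity via Wagner's theorem. Since the definitions of the sets $X = X_{uvw}$ and $Y = Y_{uvw}$ are exchanged by the swap $(u, X) \leftrightarrow (w, Y)$, it suffices to prove the first statement; the second then follows by the symmetric argument.

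Concretely, suppose $p \in V(G) \setminus (Y \cup \{u\})$ has three distinct neighbours $x_1, x_2, x_3 \in X$. I would propose the branch sets $\{u\}$, $\{p\}$, $\{w\} \cup Y$, $\{x_1\}$, $\{x_2\}$, $\{x_3\}$, with the first three on one side of the bipartition and the last three on the other. The key steps are, in order: (i) observe that $\{w\} \cup Y$ induces a connected subgraph of $G$, which is immediate from $Y \subseteq N(w)$; (ii) verify the nine required cross-edges --- the edges $u x_i$ come from $X \subseteq N(u)$, the edges $p x_i$ hold by hypothesis, and each $x_i$ has a neighbour in $Y \subseteq \{w\} \cup Y$ by the very definition of $X$; and (iii) confirm that the six branch sets are pairwise disjoint. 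Together these checks exhibit a $K_{3,3}$ minor of $G$, contradicting planarity.

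The only step that requires any thought is the disjointness check, where the crucial subtlety is to rule out $p = w$; the remaining disjointness conditions reduce mechanically to the definitional facts $X \subseteq N(u) \setminus (N(w) \cup \{w\})$ and $Y \subseteq N(w) \setminus (N(u) \cup \{u\})$, the hypothesis $p \notin Y \cup \{u\}$, and the simplicity of $G$. But if $p = w$ then $w$ would be adjacent to $x_1 \in X$, directly contradicting $X \cap N(w) = \emptyset$. I do not anticipate any further obstacles; the statement is essentially a planarity constraint dressed in the notation of Section \ref{Prelim sec}, and the argument reduces to confirming that the definitions line up with a $K_{3,3}$ minor in precisely the way one would expect.
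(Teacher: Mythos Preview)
Your proposal is correct and follows essentially the same approach as the paper: both arguments exhibit a $K_{3,3}$ obstruction with $\{x_1,x_2,x_3\}$ on one side and $\{u,p\}$ together with a $w$-through-$Y$ connection on the other, then invoke planarity. The only cosmetic difference is that the paper phrases this as a subdivision of $K_{3,3}$ (picking a single vertex $b\in Y\cup\{w\}$ and appealing to Kuratowski), whereas you use the branch set $\{w\}\cup Y$ and Wagner's theorem; your packaging is arguably cleaner since it sidesteps any need to verify internal disjointness of subdivision paths.
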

\begin{proof}
Suppose for a contradiction that $a\in V(G)\setminus(Y\cup\{u\})$ is adjacent to three vertices in $X$, say $x_1$, $x_2$, and $x_3$. By the definition of $X$, $a\neq w$ and $x_1,x_2,x_3\in N(u)$. Moreover, there is a path of length 2 to each of $x_1$, $x_2$, and $x_3$ from $w$, where the middle vertex in the path is in $Y$. Hence there exists a vertex $b\in Y\cup\{w\}$ such that $G$ contains a subdivision of $K_{3,3}$ with parts $\{x_1,x_2,x_3\}$ and $\{u,a,b\}$, contradicting Kuratowski's theorem. This proves the first statement, and the second is similar.
\end{proof}

\begin{claim}\label{kl2 claim}
The maximum degree of $G'$ is at least $n^{5/6}$.
\end{claim}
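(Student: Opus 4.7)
The plan is to argue by contradiction: suppose that every vertex of $G'$ has degree strictly less than $n^{5/6}$. First, by Lemma~\ref{X and Y} applied to the path $uvw$, $G'$ is a forest, and since $u$ and $w$ are the only principal neighbours of $v$, every induced 5-cycle through $v$ has $u$ and $w$ as the two neighbours of $v$ in the cycle, and so corresponds to a unique edge of $G'$. Combined with the hypothesis that $v$ lies in at least $\floor{\frac{2(n-1)}{3}}-2$ induced 5-cycles, this gives $|E(G')|\geq \floor{\frac{2(n-1)}{3}}-2$, and since $G'$ is a non-empty forest, $|X|+|Y|\geq \floor{\frac{2(n-1)}{3}}-1$.

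Next, I would record two enabling facts that follow from the hypotheses. The first is that any $K_{2,7\lceil n^{4/5}\rceil}$ subgraph of $G$ contains an empty $K_{2,7}$: this follows from Lemma~\ref{Ghosh Cor 1}, because the alternative outcome there gives a vertex in at most $\frac{11}{20}n$ induced 5-cycles, which for large $n$ is strictly less than $\floor{\frac{2(n-1)}{3}}-2$, contradicting our hypothesis. The second is a lower bound on the degree of each vertex in $X \cup Y$: every $x\in X$ lies in at least $\floor{\frac{2(n-1)}{3}}-2$ induced 5-cycles, each of which uses two neighbours of $x$, so $\binom{d(x)}{2}\geq \floor{\frac{2(n-1)}{3}}-2$ and hence $d(x)\geq c\sqrt{n}$ for some constant $c>0$; the same holds for $y\in Y$.

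The main step is to manufacture a $K_{2,t}$ subgraph of $G$ with $t\geq 7\lceil n^{4/5}\rceil$. Since $\Delta(G')<n^{5/6}$, each $x\in X$ has at most $n^{5/6}$ of its $\geq c\sqrt n$ neighbours in $Y$, and by the (symmetric) version of Claim~\ref{2 nbrs in X}, every vertex of $V(G)\setminus(Y\cup\{u\})$ has at most two neighbours in $X$. Using this together with $|X|\geq \tfrac{1}{2}(|X|+|Y|)-O(n^{5/6})$ (from the low max degree bound and $|E(G')|\geq 2n/3-O(1)$) and a double-counting over pairs of $X$-vertices that share a common neighbour outside $\{u\}\cup Y$, I would produce either a vertex in $Y\cup\{u\}$ with at least $7\lceil n^{4/5}\rceil$ neighbours in $X$, or (symmetrically) a vertex in $X\cup\{w\}$ with at least $7\lceil n^{4/5}\rceil$ neighbours in $Y$. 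The former already exceeds $n^{5/6}$ when combined with the $u$-side, and with $y\in Y$ it directly gives $d_{G'}(y)\geq 7\lceil n^{4/5}\rceil$; however, since $n^{4/5}<n^{5/6}$ for large $n$, this alone is not yet enough, so we feed the resulting $K_{2,7\lceil n^{4/5}\rceil}$ into Lemma~\ref{Ghosh Cor 1} to obtain an empty $K_{2,7}$, whose central vertex $v'$ has only two principal neighbours $u$ and $y$ (or $w$ and $x$ in the symmetric case).

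Finally, I would derive the contradiction by playing off the two empty $K_{2,7}$'s, the original one centred at $v$ (with parts involving $u, w$) and the new one centred at $v'$ (with parts involving $u,y$ or $w,x$). Since $y\in Y\subseteq N(w)\setminus N(u)$, the vertices $u,v,w,v',y$ together with selected extra neighbours from the two empty $K_{2,7}$ structures should be arrangeable into a subdivision of $K_{3,3}$, violating Kuratowski's theorem. The hardest part of the plan is the double-counting in the main step: the naive sum-of-degrees in $G'$ only gives constant average degree over potentially $\Theta(n)$ vertices, so the existence of a concentrated $K_{2,t}$ must rely critically on Claim~\ref{2 nbrs in X} (restricting common $X$-neighbourhoods of vertices outside $Y\cup\{u\}$) and on the $\sqrt n$ lower bound on $d(x)$, which together force the excess degree of $x$ in $G$ to pile up on a bounded number of ``hub'' vertices in $Y\cup\{u\}$.
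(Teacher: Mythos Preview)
Your proposal has several genuine gaps. First, the inequality $\binom{d(x)}{2}\geq\lfloor\tfrac{2(n-1)}{3}\rfloor-2$ is false: distinct induced 5-cycles through $x$ need not use distinct pairs of neighbours of $x$. Indeed, the vertex $v$ itself has only the single pair $\{u,w\}$ of principal neighbours yet lies in $\Theta(n)$ induced 5-cycles; so your ``$d(x)\geq c\sqrt n$'' enabling fact is unsupported. Even if it were true, $c\sqrt n<n^{5/6}$ for large $n$, so it would not help the way you use it (it gives no lower bound on the number of neighbours of $x$ outside $Y$). Second, the balance claim $|X|\geq\tfrac12(|X|+|Y|)-O(n^{5/6})$ does not follow from the hypotheses: from $\Delta(G')<n^{5/6}$ and $|E(G')|\geq\tfrac{2n}{3}-O(1)$ you can only conclude $|X|,|Y|\geq\Theta(n^{1/6})$, not $|X|\approx|Y|$. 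Your ``main step'' double-counting is never actually carried out, and with these ingredients it cannot be. Third, the concluding $K_{3,3}$ argument is speculative: two empty $K_{2,7}$'s sharing a vertex in their parts of size~2 coexist happily in planar graphs (they do so, for example, in the extremal graphs of the required form), so no topological obstruction arises from that configuration alone.

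The paper's argument is entirely different and does not try to manufacture a second empty $K_{2,7}$ at this stage. Assuming $\Delta(G')<n^{5/6}$, it first shows $|N(u)\cap N(w)|\geq\tfrac13 n+o(n)$: for each $z\in X\cup Y$, the induced 5-cycles through $z$ either contain both $u$ and $w$ (and hence a vertex of $N(u)\cap N(w)$), or avoid one of them, and the latter are controlled by $\binom{d_G(z)}{2}$ times the number $T(z)$ of vertices reachable from $z$ by short paths avoiding $u$ and $w$. A maximal matching in $G'$, together with the low max-degree assumption, bounds $T(z)$ by $O(n^{5/6})$ for most $z$, and a pigeonhole picks out a $z$ with $d_{G'}(z)\leq 2$, $d_G(z)\leq 60$, and $T(z)=O(n^{5/6})$, forcing $|N(u)\cap N(w)|$ to be large. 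The paper then uses this large common neighbourhood, together with Lemma~\ref{Ghosh Cor 1}, to bound the total number of induced 5-cycles in $G$ by $\tfrac29 n^2+o(n^2)$, contradicting the extremality assumption that $G$ has $f_I(n,C_5)=\tfrac13 n^2+O(n)$ induced 5-cycles.
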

\begin{proof}
Suppose otherwise. We will start by showing that $u$ and $w$ have a common neighbourhood of size at least $\frac{1}{3}n+o(n)$. Let $x\in X$. By assumption, $x$ is in at least $\floor{\frac{2(n-1)}{3}}-2$ induced 5-cycles in $G$. If an induced 5-cycle contains $u$, $x$, and $w$, then since neither $u$ nor $x$ is adjacent to $w$, and by the definition of $Y$, the 5-cycle must be of the form $uxywa$ for some $y\in Y$ and $a\in N(u)\cap N(w)$. Hence the number of such 5-cycles is at most $d_{G'}(x)|N(u)\cap N(w)|$.

If an induced 5-cycle contains $x$ but not both $u$ and $w$, then each vertex in the cycle which is not adjacent to $x$ is either $w$ or has a path of length at most 3 to $x$ which avoids both $u$ and $w$. For a vertex $z\in X\cup Y$, let $T(z)$ be the number of vertices of $G$ from which there is a path to $z$ of length at most 3 which avoids both $u$ and $w$. Then by Lemma \ref{X and Y} there are at most $\binom{d_G(x)}{2}T(x)$ induced 5-cycles in $G$ containing $x$ but not both $u$ and $w$.

Similar arguments hold if we replace $x$ with a vertex in $Y$, so for all $z\in X\cup Y$ we have \begin{equation}\label{big common nbhd eqn}
\floor{\frac{2(n-1)}{3}}-2\leq d_{G'}(z)|N(u)\cap N(w)| + \binom{d_G(z)}{2}T(z).
\end{equation}To show that $|N(u)\cap N(w)|$ is `large' we will therefore aim to show that there exists $z\in X\cup Y$ such that $d_{G'}(z)$, $d_G(z)$, and $T(z)$ are `small'.

First, recall from the proof of Lemma \ref{X and Y} that $G'$ is planar and hence has at most $|X|+|Y|-1$ edges. So if $l_1$ is the number of vertices of degree at least 3 in $G'$, then $3l_1\leq 2(|X|+|Y|-1)$, which gives $|X|+|Y|-l_1\geq \frac{1}{3}\left(|X|+|Y|+2\right)\geq \frac{1}{3}\left(\floor{\frac{2(n-1)}{3}}+1\right)\geq \frac{n}{5}$. Hence there are at least $\frac{n}{5}$ vertices in $G'$ with degree at most 2.

Next, since $G$ is planar it has at most $3n$ edges. If $l_2$ is the number of vertices of degree at least 61 in $G$, then $61l_2\leq 6n$, so $l_2\leq \frac{n}{10}$. Thus there are at least $\frac{9n}{10}$ vertices in $G$ with degree at most 60.

Finally we turn to $T(z)$. Take a maximal matching in $G'$ between $X$ and $Y$. Label the vertices in this matching as $x_1,\dots,x_k$ and $y_1,\dots,y_k$ where $x_i\in X$ and $y_i\in Y$ for all $i$, where edge $x_iy_i$ is in the matching for all $i$, and where the vertices $x_1,\dots,x_k$ are arranged in that order around $u$ in the drawing of $G$. This matching, along with the edges $ux_i$ and $wy_i$ for each $i$ split the plane into $k$ regions $R_1,\dots,R_k$, where $R_i$ has boundary $ux_iy_iwy_{i+1}x_{i+1}$ for each $i$, with the addition in the subscripts taken modulo $k$. Since the matching is maximal, every vertex in $X\cup Y$ in the interior of region $R_i$ is adjacent to one of the vertices in $X\cup Y$ on its boundary. We have assumed that the maximum degree of $G'$ is less than $n^{5/6}$, so the total number of vertices in $X\cup Y$ in each region $R_i$ (including the vertices on its boundary) is at most $4n^{5/6}$.

Let $q$ be any vertex of $G$ which is not $u$ or $w$. Then there is some $1\leq i\leq k$ such that $q$ is in $R_i$ (including possibly on its boundary). If $z$ is a vertex in $X\cup Y$ such that there is a path in $G$ of length at most 3 from $q$ to $z$ avoiding $u$ and $w$, then $z$ is in $R_{i-4}\cup\dots\cup R_{i+4}$ (including on its boundary) where again the addition in the subscripts is modulo $k$. Hence there are at most $36n^{5/6}$ vertices $z$ for which this can hold, and at most $36n^{11/6}$ pairs $(q,z)$ for which this holds. Let $l_3$ be the number of vertices in $X\cup Y$ with $T(z)\geq 1000n^{5/6}$. Then there are at least $1000l_3n^{5/6}$ pairs $(q,z)$ as above, so $1000l_3n^{5/6}\leq 36n^{11/6}$. Rearranging, we find $l_3\leq \frac{36n}{1000}\leq \frac{n}{20}$.

Combining these three calculations, there are at least $\frac{n}{5}$ vertices $z\in X\cup Y$ with $d_{G'}(z)\leq 2$, of which at most $\frac{n}{10}$ have degree greater than 60 in $G$ and at most $\frac{n}{20}$ have $T(z)\geq 1000n^{5/6}$. So there exists $z\in X\cup Y$ with $d_{G'}(z)\leq 2$, $d_G(z)\leq 60$, and $T(z)<1000n^{5/6}$. Hence by (\ref{big common nbhd eqn}) we have \[|N(u)\cap N(w)|\geq \frac{1}{2}\left(\floor{\frac{2(n-1)}{3}}-2-\binom{60}{2}\cdot1000n^{5/6}\right)= \frac{1}{3}n+o(n).\]

To complete the proof of the claim we will show that $G$ contains at most $\frac{2}{9}n^2+o(n^2)$ induced 5-cycles. By Lemma \ref{5-cycle count}, an $n$-vertex principal graph of the required form contains $\frac{1}{3}n^2+O(n)$ induced 5-cycles, so since $n$ is large this contradicts $G$ containing $f_I(n,C_5)$ induced 5-cycles.

Label the vertices in $N(u)\cap N(w)$ in the order they're arranged around $u$ as $v_1,\dots,v_t$. The drawing of the bipartite graph with parts $\{u,w\}$ and $\{v_1,\dots,v_t\}$ contained in the drawing of $G$ splits the plane into $t$ regions $R_1,\dots,R_t$, where $R_i$ has boundary $uv_iwv_{i+1}$, with the addition in the subscript taken modulo $t$. By Lemma \ref{Ghosh lemma 4}, if any of the interiors of the regions $R_3,\dots,R_{t-3}$ contain a vertex, then they contain at least $n^{1/5}$ vertices, so at most $n^{4/5}$ of these interiors contain a vertex.

Thus for all but $o(n)$ values of $i$, none of the regions $R_{i-3}, R_{i-2}, \dots, R_{i+2}$ have any vertices in their interior, and hence the only principal neighbours of $v_i$ are $u$ and $w$. This implies that $v_i$ is in at most $|X|+|Y|-1\leq n-|N(u)\cap N(w)|\leq \frac{2}{3}n+o(n)$ induced 5-cycles in $G$. Hence deleting all $\frac{1}{3}n+o(n)$ of these vertices $v_i$ removes at most $\frac{2}{9}n^2+o(n^2)$ induced 5-cycles.

Let $H$ be the graph which remains after these vertices have been deleted. Then $H$ contains $\frac{2}{3}n+o(n)$ vertices, and it's enough to show that it contains $o(n^2)$ induced 5-cycles. First, let $S=V(H)\setminus(X\cup Y\cup \{u,w\})$. Then since $|X|+|Y|\geq \floor{\frac{2(n-1)}{3}}-1$, $S$ has size $o(n)$. By Claim \ref{2 nbrs in X}, no vertex in $S$ is adjacent to more than six vertices in $X\cup Y\cup \{u,w\}$. Since $H[S]$ is planar, there is therefore a vertex in $S$ of degree at most 11 in $H$. By Lemma \ref{X and Y} this vertex is in at most $\binom{11}{2}n$ induced 5-cycles in $H$. Hence removing the vertices in $S$ from $H$ one by one, at each stage removing the vertex with the smallest degree, we find that deleting $S$ from $H$ removes only $o(n^2)$ induced 5-cycles.

It remains to show that $H'=G[X\cup Y\cup \{u,w\}]$ contains only $o(n^2)$ induced 5-cycles. Let $\Gamma$ be an induced 5-cycle in $H'$, and suppose it does not contain $u$ or $w$. If $\Gamma$ contains a path that goes from $X$ to $Y$, then back $X$, then back to $Y$, then in a similar way to in the proof of Lemma \ref{X and Y}, this implies the existence of a subdivision of $K_{3,3}$ in $G$. So there are no induced 5-cycles of this form.

Suppose instead that $\Gamma$ is of the form $x_1x_2x_3y_1y_2$ where $x_1,x_2,x_3\in X$ and $y_1,y_2\in Y$. Suppose we are given $x_2$. Then by Claim \ref{2 nbrs in X}, $x_2$ has at most two neighbours in $X$, so there are two choices for $x_1$, and then $x_3$ is determined. By assumption each vertex in $X$ has at most $n^{5/6}$ neighbours in $Y$, so for each choice of $x_1$ there are at most $n^{5/6}$ choices for $y_2$. Finally, each such $y_2$ has at most two neighbours in $Y$. Hence there are only $o(n^2)$ such cycles $\Gamma$. Similarly, there are $o(n^2)$ induced 5-cycles with two vertices in $X$ and three in $Y$.

The final step in the proof of the claim will be to show that there are only $o(n^2)$ induced 5-cycles in $H'$ containing $u$ or $w$. We do these by deleting the vertices in $X\cup Y$ one by one in such an order that at each stage the vertex we delete is in $o(n)$ induced 5-cycles containing $u$ or $w$. The vertex we delete at each step is a vertex in what remains of $X\cup Y$ which has the lowest degree in what remains of $G'$. Since $G'$ is a forest, this vertex has degree at most 1 in $G'$.

Consider a particular step in the deletion process where without loss of generality we are deleting $x\in X$. If $\Gamma$ is an induced 5-cycle in $H'$ containing $x$ and one of $u$ and $w$, then it must be of the form $uxyy'x'$ or $wyxx'y'$ for $x'\in X$ and $y,y'\in Y$. If $x$ has degree 0 in what remains of $G'$, then clearly there are no such cycles. Otherwise let $y$ be the unique neighbour of $x$ in what remains of $Y$. By Claim \ref{2 nbrs in X}, $y$ has at most two neighbours in $Y$, and by assumption each of these has at most $n^{5/6}$ neighbours in $X$. Similarly, $x$ has at most two neighbours in $X$ and each of these has at most $n^{5/6}$ neighbours in $Y$. Thus $x$ is in $o(n)$ induced 5-cycles containing $u$ or $w$ in the current graph, as required.
\end{proof}

Relabel $v$ and $u$ as $a$ and $u_1$ respectively. By Claim \ref{kl2 claim}, without loss of generality there is a vertex $u_2\in X$ with at least $n^{5/6}$ neighbours in $Y$. So the complete bipartite graph with parts $\{u_2,w\}$ and $N(u_2)\cap N(w)$ contains an empty $K_{2,7}$ by Lemma \ref{Ghosh Cor 1}. Let $b_1$, $b_2$, and $b_3$ be the central three vertices of the part of size 7, then $u_2$ and $w$ are the only principal neighbours of each of these vertices. None of $b_1$, $b_2$, and $b_3$ are equal to $u_1$, since $u_1$ and $w$ are not neighbours, and moreover at most two of these vertices are adjacent to $u_1$, otherwise $G$ has a $K_{3,3}$ subgraph. Let $b$ be one of the three which is not adjacent to $u_1$, then $b\in Y$ by the definition of $Y$.

Let $X'=X_{u_2bw}$ and $Y'=Y_{u_2bw}$. Then by Lemma \ref{X and Y}, $|X'|+|Y'|\geq \floor{\frac{2(n-1)}{3}}-1$, so $|(X\cup Y)\cap (X'\cup Y')|\geq \frac{1}{4}n$. Clearly $X\cap Y'$ and $X'\cap Y$ are empty, so $|X\cap X'|+|Y\cap Y'|\geq \frac{1}{4}n$. If $z\in X\cap X'$, then $z$ is a vertex in $X$ which is adjacent to $u_2\in X$, so $|X\cap X'|\leq 2$ by Claim \ref{2 nbrs in X}. Thus $|Y\cap Y'|\geq \frac{1}{5}n$.

Let $z_1$, $z_2$, and $z_3$ be distinct vertices in $Y\cap Y'$. Let $t_1$, $t_2$, and $t_3$ be (not necessarily distinct) vertices in $X'$ such that $z_i$ and $t_i$ are neighbours for each $i=1,2,3$. Without loss of generality, the cycle $u_2t_2z_2wb$ splits the plane into two regions, one of which contains $z_1$ and the other of which contains $z_3$. Again without loss of generality, $u_1$ is in the same region as $z_1$. Since $z_3\in Y$, there is a path of length 2 from $u_1$ to $z_3$, so $u_1$ and $z_3$ have a common neighbour among the vertices in the cycle $u_2t_2z_2wb$. However, $u_2\not\in N(z_3)$ since $z_3\in Y'$, $z_2\not\in N(u_1)$ since $z_2\in Y$, $w\not\in N(u_1)$ since there is an induced 5-cycle in $G$ containing the path $u_1aw$, and $b\not\in N(u_1)$ since $b\in Y$. Hence $t_2\in N(u_1)$, and consequently $t_2\in X$. 

Therefore all but at most two vertices in $Y\cap Y'$ have a neighbour in $X\cap X'$, but we know there are at most two vertices in $X\cap X'$, so there exists one, say $u_3$, with at least $\frac{1}{12}n$ neighbours in $Y\cap Y'$. Then $u_3$ and $w$ have a common neighbourhood of size at least $\frac{1}{12}n$ in $G$, so by Lemma \ref{Ghosh Cor 1} $G$ contains an empty $K_{2,7}$ whose part of size 2 is $\{u_3,w\}$. Let $c$ be the central vertex in the part of size seven. Since $u_2\in X$ and $u_3\in X\cap X'$, we have that $u_1$, $u_2$, and $u_3$ are all mutually adjacent and none are adjacent to $w$. The principal neighbours of $a$, $b$, and $c$ are exactly $u_1$ and $w$, $u_2$ and $w$, and $u_3$ and $w$ respectively, and thus the lemma is proved.
\end{proof}

\section{Proof of Lemma \ref{key lemma 2}}\label{key lemma 2 sect}
We now prove Lemma \ref{key lemma 2}.
\begin{proof}[Proof of Lemma \ref{key lemma 2}]
As in the statement of the lemma, let $n\geq 19$, and let $G$ be an $n$-vertex planar graph containing $f_I(n,C_5)$ induced 5-cycles. Suppose that $G$ contains distinct vertices $u_1$, $u_2$, $u_3$, $a$, $b$, $c$, and $w$ satisfying the conditions in the statement of Lemma \ref{key lemma 1}. Let $A$ be the set of vertices of $G$ whose principal neighbours are exactly $u_1$ and $w$. Let $B$ and $C$ be the corresponding sets for $u_2$ and $w$, and $u_3$ and $w$ respectively. Note that $a\in A$, $b\in B$, and $c\in C$ but $u_1,u_2,u_3,w\not\in A\cup B\cup C$.

Define $H$ to be the graph obtained by deleting from $G$ any edges with one endpoint in $A$ and the other endpoint not in $\{u_1,w\}$, and also any analogous edges for $B$ and $C$. Then $H$ is a planar $n$-vertex graph which, by the definitions of $A$, $B$, and $C$, has the same set of induced 5-cycles as $G$.

Let $Z=V(H)\setminus(A\cup B\cup C\cup \{u_1,u_2,u_3,w\})$. Then the graph $H-Z$ consists of the triangle $u_1u_2u_3$ and the three complete bipartite graphs with parts $\{u_1,w\}$ and $A$, $\{u_2,w\}$ and $B$, and $\{u_3,w\}$ and $C$ (see Figure \ref{H-Z}). It is straightforward to see that $a$ is in exactly $|B|+|C|$ induced 5-cycles in $H$ which avoid $Z$, and analogously for $b$ and $c$. In fact, we see that there are exactly $|A||B|+|A||C|+|B||C|$ induced 5-cycles in $H$ which avoid $Z$.

Let $k_1$, $k_2$, and $k_3$ be the non-negative constants such that $a$, $b$, and $c$ are in exactly $|B|+|C|+k_1$, $|A|+|C|+k_2$, and $|A|+|B|+k_3$ induced 5-cycles in $H$ respectively. Let $T=2(|A|+|B|+|C|)+k_1+k_2+k_3$ be the sum of these numbers. Since $|A|+|B|+|C|=n-|Z|-4$, we have
\begin{equation}\label{eqn for T}
    T-2n+2|Z|+8=k_1+k_2+k_3.
\end{equation}

Vertices $a$, $b$, and $c$ are respectively in $k_1$, $k_2$, and $k_3$ induced 5-cycles in $H$ containing a vertex of $Z$. If an induced 5-cycle in $H$ contains two vertices from $a$, $b$, and $c$, then it does not contain any vertices in $Z$, so the number of induced 5-cycles in $H$ which contain a vertex in $Z$ and one of $a$, $b$, and $c$ is $k_1+k_2+k_3$.

Using a similar approach to that used in the proof of Lemma \ref{drawing of required form}, it is straightforward to see that for any drawing of $H-Z$ there exists a labeling of the vertices of $A$ as $a_1,\dots,a_{|A|}$ and similarly for $B$ and $C$ such that the faces of the drawing have boundaries given by the cycles $u_1u_2u_3$, $u_1a_{|A|}wb_1u_2$, $u_2b_{|B|}wc_1u_3$, $u_3c_{|C|}wa_1u_1$, $u_1a_iwa_{i+1}$ for $1\leq i \leq |A|-1$, $u_2b_iwb_{i+1}$ for $1\leq i \leq |B|-1$, and $u_3c_iwc_{i+1}$ for $1\leq i \leq |C|-1$. Fix a drawing of $H$, consider the induced drawing of $H-Z$, and label the vertices of $A$, $B$, and $C$ as above. We will refer to the faces whose boundaries are the second, third, and fourth of the cycles listed as $F_1$, $F_2$, and $F_3$ respectively (see Figure \ref{H-Z}).

\begin{figure}[!ht]
    \centering
    \includegraphics[width=0.7\textwidth]{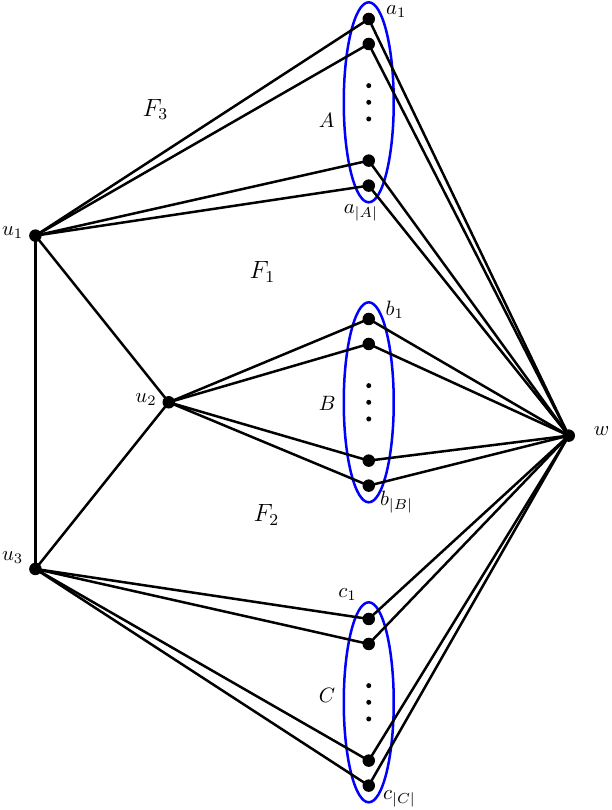}
    \caption{Illustration of the drawing of $H-Z$, with faces $F_1$, $F_2$, and $F_3$ labelled}
    \label{H-Z}
\end{figure}

We now consider which faces of the drawing of $H-Z$ contain vertices of $Z$ in the drawing of $H$. Suppose $|A|\geq 2$, and let $Z_0\subseteq Z$ be the set of vertices in the face with boundary $u_1a_1wa_2$. Then the only vertices in $V(H)\setminus Z_0$ which could be adjacent to a vertex in $Z_0$ are $u_1$ and $w$. The only neighbours of $a_{|A|}$ in $H$ are $u_1$ and $w$, so we can redraw $H$ with no vertices in the face with boundary $u_1a_1wa_2$ by moving the vertices in $Z_0$ into $F_1$. Repeating this process for the other faces of $H-Z$ whose boundaries are 4-cycles, we see that way may assume that all vertices of $Z$ are in $F_1$, $F_2$, $F_3$, or the triangular face.

If $z\in Z$ is in an induced 5-cycle in $H$ containing one of $a$, $b$, and $c$, then $z$ is at distance 1 or 2 from $w$. Therefore $z$ is not in the triangular face of $H-Z$, so it is in one of $F_1$, $F_2$, or $F_3$. Note that no induced 5-cycle in $H$ containing one of $a$, $b$, and $c$ can contain vertices in $Z$ from two of $F_1$, $F_2$, and $F_3$. Hence the number of induced 5-cycles in $H$ containing one of $a$, $b$, and $c$ and a vertex in $Z$ is the sum over $F_1$, $F_2$, and $F_3$ of the number of such cycles containing a vertex of $Z$ in that face.

Let $Z_1\subseteq Z$ be the set of vertices in $F_1$. For each $S\subseteq \{u_1,u_2,w\}$, let $L_S$ be the set of vertices in $Z_1$ whose neighbours among $u_1$, $u_2$, and $w$ are exactly the vertices in $S$, and let $l_S=|L_S|$. The number of induced 5-cycles containing $c$ and a vertex in $Z_1$ is $2l_{\{u_1,u_2,w\}}+l_{\{u_1,w\}} +l_{\{u_2,w\}}$. The number of induced 5-cycles containing $b$ and a vertex in $Z_1$ is $l_{\{u_1,w\}}$ plus the number of paths $u_2z_1z_2w$ where $z_1, z_2\in Z_1$ are such that $z_1\not\in N(w)$ and $z_2\not\in N(u_2)$. The number of such paths is the number of edges between $L_{\{u_2\}}\cup L_{\{u_1,u_2\}}$ and $L_{\{w\}}\cup L_{\{u_1,w\}}$. By a similar argument to that used in the proof of Lemma \ref{X and Y}, the bipartite graph between these two sets is a forest. Moreover, it is straightforward to use the planarity of $H$ to show that there can be at most one vertex in $L_{\{u_1,w\}}$ which has a neighbour in $L_{\{u_2\}}\cup L_{\{u_1,u_2\}}$.

Hence the number of paths $u_2z_1z_2w$ is at most $l_{\{u_2\}}+ l_{\{u_1,u_2\}} + l_{\{w\}}$, with equality only if one of the following holds:
\begin{enumerate}[label=(\roman*)]
    \item there is a vertex in $L_{\{u_1,w\}}$ which has a neighbour in $L_{\{u_2\}}\cup L_{\{u_1,u_2\}}$, every vertex in $L_{\{u_1,u_2\}}$ has a neighbour in $L_{\{w\}}\cup L_{\{u_1,w\}}$, and every vertex in $L_{\{w\}}$ has a neighbour in $L_{\{u_2\}}\cup L_{\{u_1,u_2\}}$, or
    \item $l_{\{u_2\}}=l_{\{u_1,u_2\}}=l_{\{w\}}=0$.
\end{enumerate}

Indeed, if there is no vertex in $L_{\{u_1,w\}}$ with a neighbour in $L_{\{u_2\}}\cup L_{\{u_1,u_2\}}$, then since the bipartite graph between $L_{\{u_2\}}\cup L_{\{u_1,u_2\}}$ and $L_{\{w\}}$ is a forest it contains at most $l_{\{u_2\}}+ l_{\{u_1,u_2\}} + l_{\{w\}}$ edges, with equality if and only if it has no vertices. If instead there is a vertex $p\in L_{\{u_1,w\}}$ which has a neighbour in $L_{\{u_2\}}\cup L_{\{u_1,u_2\}}$, then the number of edges in the bipartite graph between $L_{\{u_2\}}\cup L_{\{u_1,u_2\}}$ and $L_{\{w\}}\cup L_{\{u_1,w\}}$ is the same as the number of edges between $L_{\{u_2\}}\cup L_{\{u_1,u_2\}}$ and $L_{\{w\}}\cup \{p\}$. Again, this is at most $l_{\{u_2\}}+ l_{\{u_1,u_2\}} + l_{\{w\}}$ with equality if and only if the forest is connected, which in particular implies that (i) is satisfied.

So the number of induced 5-cycles containing $b$ and a vertex in $Z_1$ is at most $l_{\{u_1,w\}}+l_{\{u_2\}}+l_{\{u_1,u_2\}} + l_{\{w\}}$ with equality only if one of (i) or (ii) holds. We can similarly bound the number of induced 5-cycles containing $a$ and a vertex in $Z_1$ by swapping the roles of $u_1$ and $u_2$. Let the conditions corresponding to (i) and (ii) in this setting be called (i)' and (ii)'. Summing, we find that the total number of induced 5-cycles that contain one of $a$, $b$, and $c$ and a vertex in $Z_1$ is at most $2l_{\{u_1,u_2,w\}}+2l_{\{u_1,w\}} +2l_{\{u_2,w\}}+ l_{\{u_2\}}+ 2l_{\{u_1,u_2\}} + 2l_{\{w\}} + l_{\{u_1\}}$, with equality only if one of each of (i) and (ii), and (i)' and (ii)' hold.

Repeating for $F_2$ and $F_3$, and recalling that no vertices in the triangular face of $H-Z$ are in an induced 5-cycle in $H$ containing any of $a$, $b$, and $c$, we find that there are at most twice as many induced 5-cycles in $H$ containing one of $a$, $b$, and $c$ and a vertex in $Z$ as there are vertices in $Z$. Recall that the number of such cycles is $k_1+k_2+k_3$, which is equal to $T-2n+2|Z|+8$ by equation (\ref{eqn for T}). Hence \begin{equation}\label{ineq for T}
    T\leq 2n-8.
\end{equation}

Suppose that all the vertices in $G$, so in particular $a$, $b$, and $c$, are in at least $\floor{\frac{2(n-1)}{3}}-1$ induced 5-cycles. Then this is also true in $H$, so \[T\geq3\floor{\frac{2(n-1)}{3}}-3=\begin{cases}2n-6, & \text{if $n\equiv 0\pmod{3}$} \\ 2n-5, & \text{if $n\equiv 1 \pmod{3}$} \\ 2n-7, & \text{if $n\equiv 2 \pmod{3}$}. \end{cases}\] This is a contradiction, so there is a vertex of $G$ in at most $\floor{\frac{2(n-1)}{3}}-2$ induced 5-cycles, which proves the first part of the lemma.

We now prove the second part of the lemma, so assume that $n\equiv 1\pmod{3}$ and every vertex of $G$ is in at least $\floor{\frac{2(n-1)}{3}}-2=\frac{2n-8}{3}$ induced 5-cycles. Then the same is true in $H$, so $T\geq 2n-8$. So in fact we have $T=2n-8$, and each of $a$, $b$, and $c$ are in exactly $\frac{2n-8}{3}$ induced 5-cycles. Moreover, the fact that we have equality in (\ref{ineq for T}) implies that there are exactly twice as many induced 5-cycles in $H$ containing one of $a$, $b$, and $c$ and a vertex in $Z$ as there are vertices in $Z$. Hence there are no vertices of $Z$ in the triangular face of $H-Z$, and in $F_1$ we must have $l_\emptyset=l_{\{u_2\}}=l_{\{u_1\}}=0$ and one of each of conditions (i) and (ii), and (i)' and (ii)' must hold (and analogously in $F_2$ and $F_3$).

We now use these conditions on the vertices in $Z_1$ to show that $H[Z_1\cup\{u_1,u_2,w\}]$ must belong to one of three families of graphs. The same will hold for faces $F_2$ and $F_3$ by symmetry. The first possibility is that $Z_1=\emptyset$, in which case we will say that $F_1$ is \emph{of type 1}. The second possibility is that $Z_1$ contains a single vertex $z$ which is adjacent to all of $u_1$, $u_2$, and $w$, in which case we will say that $F_1$ is \emph{of type 2}.

The final possibility is that $Z_1$ contains $m\geq 3$ vertices $z_1,\dots,z_m$ and $H[Z_1\cup\{u_1,u_2,w\}]$ contains the edges $u_1u_2$, $u_1z_1$, $u_2z_1$, $u_1z_2$, and $u_2z_m$, and the edges $z_1z_i$ and $z_iw$ for all $2\leq i\leq m$. The remaining edges of $H[Z_1\cup\{u_1,u_2,w\}]$ form a subset of $\{z_iz_{i+1}:2\leq i\leq m-1\}$. In this case we say that $F_1$ is \emph{of type 3}. The three possibilities for $F_1$ are illustrated in Figure \ref{Z types}, where red lines indicate edges each of whose presence does not affect whether the face is of that type.

\begin{figure}
    \centering
    \begin{subfigure}[h]{0.3\textwidth}
        \centering
        \includegraphics[width=\textwidth]{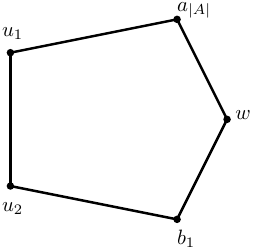}
        \caption{Type 1}
        \label{Z type 1}
     \end{subfigure}
     \hfill
     \begin{subfigure}[h]{0.3\textwidth}
        \centering
        \includegraphics[width=\textwidth]{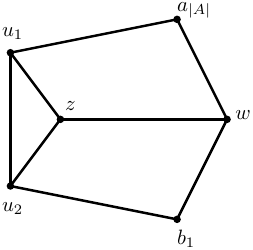}
        \caption{Type 2}
        \label{Z type 2}
     \end{subfigure}
    \hfill
     \begin{subfigure}[h]{0.3\textwidth}
        \centering
        \includegraphics[width=\textwidth]{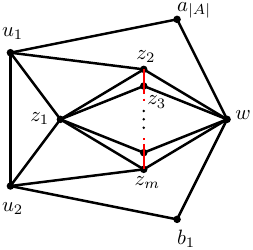}
        \caption{Type 3}
        \label{Z type 3}
     \end{subfigure}
        \caption{The possible configurations of vertices in $F_1$}
        \label{Z types}
\end{figure}

\begin{claim}\label{types claim}
Face $F_1$ is of type 1, 2, or 3. By symmetry, the same is true for $F_2$ and $F_3$.
\end{claim}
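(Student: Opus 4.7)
The plan is a case analysis on which of the equality conditions (i)/(ii) and (i)'/(ii)' is binding, together with planarity arguments inside the face $F_1$. We have already established $l_\emptyset=l_{\{u_1\}}=l_{\{u_2\}}=0$, so only the remaining classes need to be pinned down.

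First I would rule out the mixed combinations (i)+(ii)' and (ii)+(i)': condition (i) forces $l_{\{u_1,u_2\}}\geq 1$, because the vertex in $L_{\{u_1,w\}}$ demanded by (i) needs a neighbour in $L_{\{u_2\}}\cup L_{\{u_1,u_2\}}$ and $L_{\{u_2\}}=\emptyset$; but (ii)' demands $l_{\{u_1,u_2\}}=0$, and the mirror combination is symmetric. This leaves the two cases [(ii) and (ii)'] and [(i) and (i)'].

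In case [(ii) and (ii)'] the only classes that can be non-empty are $L_{\{u_1,w\}}$, $L_{\{u_2,w\}}$, and $L_{\{u_1,u_2,w\}}$. A standard planar region argument inside $F_1$ yields $l_{\{u_1,u_2,w\}}\leq 1$: such a vertex splits $F_1$ into three sub-regions, each of whose boundary touches only two of $u_1,u_2,w$, blocking a second vertex adjacent to all three. The remaining task is to show $L_{\{u_1,w\}}=L_{\{u_2,w\}}=\emptyset$, after which we obtain type 1 (if $Z_1=\emptyset$) or type 2 (if $|Z_1|=1$). I would combine the lower bound of $\frac{2n-8}{3}$ induced 5-cycles per vertex with a careful enumeration of the induced 5-cycles through a hypothetical $z\in L_{\{u_1,w\}}$: since the other classes in $Z_1$ are so sparse and vertices of $Z_1$ cannot be adjacent to $u_3$ or to vertices of $Z_2\cup Z_3$, the induced 5-cycles through $z$ are restricted enough that the required threshold cannot be met.

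In case [(i) and (i)'], a planar region argument gives $l_{\{u_1,u_2\}}=1$: any second vertex in this class would sit inside a sub-region of $F_1$ cut off from $w$, and so could not satisfy the reachability condition in (i). Call the unique vertex $z_1$. Condition (i) (now with $L_{\{u_2\}}\cup L_{\{u_1,u_2\}}=\{z_1\}$) forces every vertex of $L_{\{w\}}$ to be adjacent to $z_1$ and produces some $z_2\in L_{\{u_1,w\}}$ adjacent to $z_1$; (i)' symmetrically produces $z_m\in L_{\{u_2,w\}}$ adjacent to $z_1$. Further planar and reachability checks then establish $L_{\{u_1,u_2,w\}}=\emptyset$, $l_{\{u_1,w\}}=l_{\{u_2,w\}}=1$, and that the remaining vertices $z_3,\dots,z_{m-1}$ all lie in $L_{\{w\}}$ and attach to $z_1$, with the only additional allowed edges being a subset of the consecutive ones $z_iz_{i+1}$ for $2\leq i\leq m-1$; this is exactly type 3. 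The principal obstacle is this final stretch of repetitive planar case-checking---presumably what the appendix handles---together with the delicate counting that eliminates stray vertices in $L_{\{u_1,w\}}\cup L_{\{u_2,w\}}$ in the earlier case.
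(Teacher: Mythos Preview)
Your overall case split is right, and the treatment of the (i)+(i)$'$ case is broadly what the paper does. The genuine gap is in case [(ii) and (ii)$'$], where you propose to eliminate a hypothetical $z\in L_{\{u_1,w\}}$ by counting its induced 5-cycles and invoking the $\tfrac{2n-8}{3}$ lower bound. This cannot work as stated: such a $z$ is adjacent to both $u_1$ and $w$, so every induced 5-cycle $u_1awyx$ through $a$ has a twin $u_1zwyx$ (provided $x,y\notin N(z)$), and since $N(z)\setminus\{u_1,w\}\subseteq Z_1$ there is no reason this count should drop below the threshold; it can match the count for $a$ exactly. The sparsity of the \emph{other} classes in $Z_1$ does not help here, because the many induced 5-cycles through $u_1zw$ use vertices of $B\cup C$ and $\{u_2,u_3\}$, not of $Z_1$.

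What the paper exploits instead is the \emph{definition} of $Z$: any $z\in Z_1\cap L_{\{u_1,w\}}$ lies in $Z$ and hence not in $A$, so its set of principal neighbours is not exactly $\{u_1,w\}$. Since all neighbours of $z$ other than $u_1,w$ lie in $Z_1$, this forces a principal neighbour $z'\in Z_1$, and one then analyses an induced 5-cycle through the edge $zz'$ (which cannot contain the common neighbour $w$) to extract a $K_{3,3}$ subdivision, using that in case (ii) every vertex of $Z_1$ is adjacent to $w$ and to at least one of $u_1,u_2$. The same ``$z\notin A$ so $z$ has a principal neighbour inside $Z_1$'' device is precisely what pins down $l_{\{u_1,w\}}=l_{\{u_2,w\}}=1$ in the (i)+(i)$'$ case; your ``reachability checks'' do not capture this. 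As a minor point, the appendix does not handle this claim; it carries out the later numerical comparison of the ten type-assignments for $(F_1,F_2,F_3)$.
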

\begin{proof}
We know that $l_\emptyset=l_{\{u_2\}}=l_{\{u_1\}}=0$ and one of each of conditions (i) and (ii), and (i)' and (ii)' hold. First, if condition (ii) holds, then $l_{\{u_1,u_2\}}=l_{\{w\}}=0$. So all vertices in $Z_1$ are adjacent to exactly the vertices in one of $\{u_1,w\}$, $\{u_2,w\}$, and $\{u_1,u_2,w\}$ among $u_1$, $u_2$, and $w$. In particular all the vertices are adjacent to $w$. Suppose there exists $z\in L_{\{u_1,w\}}$. Then since $z$ is not in $A$, there is some $z'\in Z_1$ which is a principal neighbour of $z$.

Let $\Gamma$ be an induced 5-cycle in $H$ containing both $z$ and $z'$. Then $\Gamma$ does not contain $w$ since it's a common neighbour of $z$ and $z'$. If every vertex in $\Gamma$ is in $Z_1$, then all its vertices are adjacent to $u_1$ or $u_2$ so without loss of generality, some two non-adjacent vertices in $\Gamma$, say $p_1$ and $p_2$, are both adjacent to $u_1$. Let $q_1$ and $q_2$ be two other distinct non-adjacent vertices of $\Gamma$. Then $\{w,p_1,p_2\}$ and $\{u_1,q_1,q_2\}$ form the partite sets of a subdivision of $K_{3,3}$ in $G$, which is a contradiction.

Thus $\Gamma$ must contain a vertex which is not in $Z_1$, so without loss of generality it contains $u_1$. If it doesn't contain $u_2$, then the rest of its vertices are in $Z_1$, and the two which are not adjacent to $u_1$, say $p_1$ and $p_2$, must be adjacent to $u_2$. Let $q_1$ and $q_2$ be the other two vertices in $\Gamma$. Then $\{u_1,w,p_1\}$ and $\{u_2,q_1,q_2\}$ form the partite sets of a subdivision of $K_{3,3}$ in $H$, giving a contradiction. If $\Gamma$ does contain $u_2$, then since $u_1$ and $u_2$ are neighbours, the other three vertices in $\Gamma$ are in $Z_1$. One of these three must be adjacent to neither $u_1$ nor $u_2$, but no such vertex exists. Hence $l_{\{u_1,w\}}=0$. Similarly $l_{\{u_2,w\}}=0$. If $l_{\{u_1,u_2,w\}}=0$ then $F_1$ is of type 1. It is straightforward to use the planarity of $H$ to show that $l_{\{u_1,u_2,w\}}\leq 1$, so if $l_{\{u_1,u_2,w\}}\neq 0$, then $F_1$ is of type 2.

If condition (ii) doesn't hold, then neither does condition (ii)', so conditions (i) and (i)' both hold. In particular, 
\begin{enumerate}[label=(\alph*)]
    \item there is a vertex in $L_{\{u_1,w\}}$ which has a neighbour in $L_{\{u_1,u_2\}}$,
    \item there is a vertex in $L_{\{u_2,w\}}$ which has a neighbour in $L_{\{u_1,u_2\}}$,
    \item every vertex in $L_{\{u_1,u_2\}}$ has a neighbour in $L_{\{w\}}\cup L_{\{u_1,w\}}$, and
    \item every vertex in $L_{\{w\}}$ has a neighbour in $L_{\{u_1,u_2\}}$.
\end{enumerate}

Condition (a) implies that $l_{\{u_1,u_2\}}\geq 1$, and it is straightforward to deduce from (c) and the planarity of $H$ that $l_{\{u_1,u_2\}}\leq 1$. It is also straightforward to use (a) and the planarity of $H$ to deduce that $l_{\{u_1,u_2,w\}}=0$. It remains to determine $l_{\{u_1,w\}}$, $l_{\{u_2,w\}}$ and $l_{\{w\}}$.

Let $z_1$ be the unique vertex in $L_{\{u_1,u_2\}}$ and let $z_2$ be a vertex in $L_{\{u_1,w\}}$ which is adjacent to $z_1$. Let $m=|L_{\{w\}}|+3$ and let $z_m$ be a vertex in $L_{\{u_2,w\}}$ which is adjacent to $z_1$. Consider $H[\{u_1,u_2,w,a_{|A|}, b_1,z_1,z_2,z_m\}]$ with the edge $z_2z_m$ deleted if present, and the drawing of this induced by our drawing of $H$. Face $F_1$ is split into six faces whose boundaries are the cycles in a fixed list. Let $E_1$, $E_2$, and $E_3$ respectively be the faces whose boundaries are formed of the cycles $u_1a_{|A|}wz_2$, $u_2b_1wz_m$, and $z_1z_2wz_m$ (see Figure \ref{Proving type 3}). Any vertices in $L_{\{u_1,w\}}\setminus\{z_2\}$ must be in $E_1$ and any vertices in $L_{\{u_2,w\}}\setminus\{z_m\}$ must be in $E_2$. By (d), every vertex in $L_{\{w\}}$ is in $E_3$.

\begin{figure}[!ht]
    \centering
    \captionsetup{justification=centering}
    \includegraphics[width=0.5\textwidth]{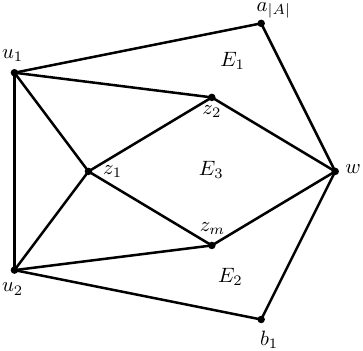}
    \caption[width=0.8\textwidth]{Illustration of the drawing of $H[\{u_1,u_2,w,a_{|A|}, b_1,z_1,z_2,z_m\}]$ with the edge $z_2z_m$ deleted if present}
    \label{Proving type 3}
\end{figure}

Suppose there exists some $z\in L_{\{u_1,w\}}\setminus\{z_2\}$. Then since $z\not\in A$, $z$ has a principal neighbour other than $u_1$ and $w$, say $z'$. Then $z'$ is either $z_2$ or is in the interior of $E_1$, so in particular $z'\in  L_{\{u_1,w\}}$. Let $\Gamma$ be an induced 5-cycle in $G$ containing $z$ and $z'$. Then $\Gamma$ does not contain $u_1$ or $w$ since these are common neighbours of $z$ and $z'$. Hence every vertex in $\Gamma$ is either $z_2$ or is in the interior of $E_1$, so they are all adjacent to $u_1$ and $w$. By an argument similar to that used in the proof of Lemma \ref{X and Y}, this implies the existence of a subdivision of $K_{3,3}$ in $H$, which is a contradiction. Therefore $L_{\{u_1,w\}}=\{z_2\}$ and similarly $L_{\{u_2,w\}}=\{z_m\}$.

Consider adding the vertices in $L_{\{w\}}$ and their edges to $w$ and $z_1$ to the drawing of $H[\{u_1,u_2,w,a_{|A|}, b_1,z_1,z_2,z_m\}]$. We see that we can label them as $z_3,\dots,z_{m-1}$ such that to preserve planarity the only other edges which could be present in $F_1$ are of the form $z_iz_{i+1}$ for $2\leq i\leq m-1$. Hence $F_1$ is of type 3. This completes the proof of the claim.
\end{proof}

To summarise, we know that:
\begin{itemize}
    \item there are exactly $|A||B|+|A||C|+|B||C|$ induced 5-cycles in $H-Z$, 
    \item $a$, $b$, and $c$ are in $|B|+|C|$, $|A|+|C|$, and $|A|+|B|$ induced 5-cycles in $H-Z$ respectively,
    \item each of $a$, $b$, and $c$ is in exactly $\frac{2n-8}{3}$ induced 5-cycles in $H$,
    \item every vertex of $Z$ is in $F_1$, $F_2$, or $F_3$,
    \item each of $F_1$, $F_2$, and $F_3$ is of type 1, 2, or 3,
    \item there are no induced 5-cycles in $H$ containing two vertices in $A\cup B\cup C$ and a vertex in $Z$, and
    \item there are no induced 5-cycles in $H$ containing one of $a$, $b$, and $c$ and vertices in $Z$ from two of $F_1$, $F_2$, and $F_3$.
\end{itemize}

For each assignment of types 1, 2, and 3 to faces $F_1$, $F_2$, and $F_3$ and for each $x\in \{a,b,c\}$, we now count how many induced 5-cycles there are in $H$ containing $x$ and a vertex in $Z$. After that, we will count how many induced 5-cycles there are in $H[\{u_1,u_2,u_3,w\}\cup Z]$ for each of these assignments. We will then use the second and third points above to calculate the sizes of $A$, $B$, and $C$ in each case. We can then use all of this information to determine the total number of induced 5-cycles in $H$ in each case.

For the first of these steps it is sufficient by symmetry to count the induced 5-cycles containing $x$ and a vertex in $F_1$ for each $x\in \{a,b,c\}$ and each assignment of a type to $F_1$. If $F_1$ is of type 1, then there are clearly no induced 5-cycles containing $a$ and a vertex in $F_1$, and similarly for $b$ and $c$. If $F_1$ is of type 2, containing a single vertex $z$, then $u_1zwcu_3$ and $u_2zwcu_3$ are induced 5-cycles in $H$. We know there are at most twice as many induced 5-cycles in $H$ containing one of $a$, $b$, and $c$ and a vertex in $F_1$ as there are vertices in $F_1$, so there are exactly two induced 5-cycles containing $c$ and $z$, and none containing $a$ and $z$ or $b$ and $z$.

Finally, if $F_1$ is of type 3 with $m\geq 3$ vertices labelled as in the definition of a type 3 face (and Figure \ref{Z type 3}), then $H$ contains the induced 5-cycles $u_1z_2wbu_2$, $u_1z_2wcu_3$, $u_2z_mwau_1$, $u_2z_mwcu_3$, $u_1z_1z_iwa$ for $3\leq i\leq m$, and $u_2z_1z_iwb$ for $2\leq i\leq m-1$. We have identified $2m$ suitable induced 5-cycles, so as above we know this is all of them. So $a$ and $b$ are each in exactly $m-1$ induced 5-cycles in $H$ containing a vertex in $F_1$, and $c$ is exactly in two.

We now determine the number of induced 5-cycles in $H[\{u_1,u_2,u_3,w\}\cup Z]$ in each case. Note that every induced 5-cycle in this graph contains a vertex of $Z$, and no induced 5-cycle can contain a vertex of $Z$ from each of $F_1$, $F_2$, and $F_3$. Let $Z_1\subseteq Z$ be the set of vertices in $F_1$ as before, and let $Z_2\subseteq Z$ be the set of vertices in $F_2$. Then by symmetry it is sufficient to calculate the number of induced 5-cycles in $H[\{u_1,u_2,u_3,w\}\cup Z_1]$ in each case, and the number of induced 5-cycles in $H[\{u_1,u_2,u_3,w\}\cup Z_1\cup Z_2]$ which contain vertices from both $Z_1$ and $Z_2$ in each case.

We first count the number of induced 5-cycles in $H[\{u_1,u_2,u_3,w\}\cup Z_1]$. Clearly there are none if $F_1$ is of type 1 or 2. Suppose $F_1$ is of type 3 with $m\geq 3$ vertices labelled as in the definition of a face of type 3. The graph $H[\{u_1,u_2,u_3,w\}\cup Z_1]$ is illustrated in Figure \ref{Counting cycles type 3}, where a red line indicates an edge which may or may not be present.

\begin{figure}
    \centering
    \captionsetup{justification=centering}
    \begin{subfigure}[b]{0.48\textwidth}
        \centering
        \captionsetup{justification=centering}
        \includegraphics[width=\textwidth]{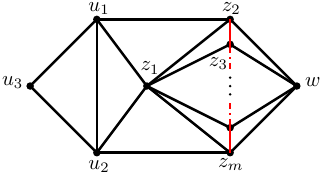}
        \caption{$H[\{u_1,u_2,u_3,w\}\cup Z_1]$ if $F_1$ is of type 3}
        \label{Counting cycles type 3}
        \vspace*{1cm}
     \end{subfigure}
     \hfill
     \begin{subfigure}[b]{0.48\textwidth}
        \centering
        \captionsetup{justification=centering}
        \includegraphics[width=0.5\textwidth]{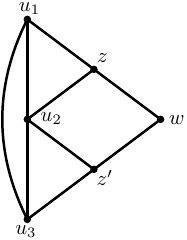}
        \caption{$H[\{u_1,u_2,u_3,w\}\cup Z_1\cup Z_2]$ if $F_1$ and $F_2$ are both of type 2}
        \label{Counting cycles type 2 and 2}
        \vspace*{1cm}
     \end{subfigure}
     
     \begin{subfigure}[b]{0.48\textwidth}
        \centering
        \captionsetup{justification=centering}
        \includegraphics[width=\textwidth]{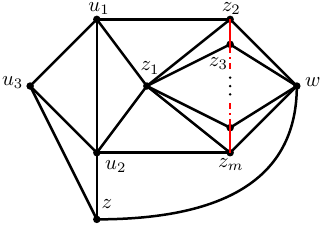}
        \vspace*{7mm}
        \caption{$H[\{u_1,u_2,u_3,w\}\cup Z_1\cup Z_2]$ if $F_1$ is of type 3 and $F_2$ is of type 2}
        \label{Counting cycles type 2 and 3}
     \end{subfigure}
    \hfill
     \begin{subfigure}[b]{0.48\textwidth}
        \centering
        \captionsetup{justification=centering}
        \includegraphics[width=\textwidth]{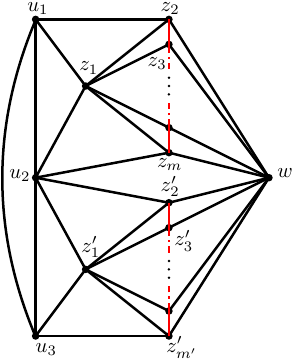}
        \caption{$H[\{u_1,u_2,u_3,w\}\cup Z_1\cup Z_2]$ if $F_1$ and $F_2$ are both of type 3}
        \label{Counting cycles type 3 and 3}
     \end{subfigure}
        \caption{Certain subgraphs of $H$}
        \label{Counting}
\end{figure}

In this graph $u_3$ is only adjacent to $u_1$ and $u_2$ and these are themselves neighbours, so $u_3$ is not in an induced 5-cycle in this graph. Next, $z_1$ is adjacent to all the remaining vertices except $w$, so it also cannot be in an induced 5-cycle in this graph. If an induced 5-cycle contains $w$, then it contains exactly two of the vertices $z_2,\dots,z_m$, and these two must not be neighbours. The remaining two vertices must be $u_1$ and $u_2$, and hence the only possible induced 5-cycle containing $w$ is $u_1z_2wz_mu_2$. This induced 5-cycle is realised if and only if $m\geq 4$, or $m=3$ and $z_2$ and $z_3$ are not neighbours.

Now suppose $\Gamma$ is an induced 5-cycle which does not contain $w$. Then it must contain at least 3 vertices from $z_2,\dots,z_m$. Clearly it cannot only contain vertices from $z_2,\dots,z_m$, so it contains at least one of $u_1$ and $u_2$. Each of these only has one neighbour in $\{z_2,\dots,z_m\}$, so in fact $\Gamma$ contains both $u_1$ and $u_2$. Hence it also contains $z_2$ and $z_m$, and the remaining vertex is a common neighbour of these two among $z_3,\dots,z_{m-1}$. So for such a cycle to appear we must have $m=4$, and the edges $z_2z_3$ and $z_3z_4$ must be present. This condition is also sufficient for the induced cycle $u_1z_2z_3z_4u_2$ to appear.

For each assignment of types 1, 2, and 3 to faces $F_1$ and $F_2$, we now count the number of induced 5-cycles in $H[\{u_1,u_2,u_3,w\}\cup Z_1\cup Z_2]$ which contain vertices from both $Z_1$ and $Z_2$. If either face is of type 1, then clearly there are no such cycles. If both faces are of type 2, then label the vertex in $F_1$ as $z$ and the vertex in $F_2$ as $z'$. The graph $H[\{u_1,u_2,u_3,w\}\cup Z_1\cup Z_2]$ is illustrated in Figure \ref{Counting cycles type 2 and 2}. It is straightforward to see (for example by considering whether or not the cycle contains $w$) that there is exactly one induced 5-cycle of the required form, namely $u_1zwz'u_3$.

Now suppose that $F_1$ is of type 3 with $m\geq 3$ vertices and $F_2$ is of type 2. Label the vertices in $F_1$ as $z_1,\dots,z_m$ in the usual way, and the vertex in $F_2$ as $z$. The graph $H[\{u_1,u_2,u_3,w\}\cup Z_1\cup Z_2]$ is illustrated in Figure \ref{Counting cycles type 2 and 3}, where a red line indicates an edge which may or may not be present.

Let $\Gamma$ be a 5-cycle in the graph containing $z$ and one of $z_1,\dots,z_m$. Then since $\Gamma$ contains $z$ it must also contain $w$. Suppose $\Gamma$ contains $z_1$, then since $z_1$ is not adjacent to $w$ the cycle must also contain a common neighbour of $z_1$ and $w$, i.e. one of $z_2,\dots,z_m$. So $\Gamma$ contains the path $zwz_iz_1$ for some $2\leq i\leq m$. The only common neighbour of $z$ and $z_1$ is $u_2$, so $\Gamma$ is $zwz_iz_1u_2$ for some $2\leq i\leq m$. This induced 5-cycle is realised if and only if $i\neq m$, so there are exactly $m-2$ such 5-cycles.

Now suppose $\Gamma$ does not contain $z_1$. Then it must contain one of $z_2,\dots,z_m$, and since these are all neighbours of $w$ it must in fact contain exactly one of them. For $3\leq i\leq m-1$, $z_i$ has no neighbours outside $\{w,z_1,\dots,z_m\}$, so $\Gamma$ must contain $z_2$ or $z_m$. If it contains $z_m$, then it also contains $u_2$, but this is a neighbour of $z$. Hence $\Gamma$ contains the path $zwz_2u_1$. The only common neighbours of $u_1$ and $z$ are $u_2$ and $u_3$, and the induced 5-cycle is realised in both cases. So in total there are $m$ induced 5-cycles in the graph containing a vertex from $Z_1$ and a vertex from $Z_2$.

Finally, suppose $F_1$ and $F_2$ are both of type 3, containing $m\geq 3$ and $m'\geq 3$ vertices respectively. Label the vertices in $F_1$ as $z_1,\dots,z_m$ in the usual way, and the vertices in $F_2$ as $z'_1,\dots,z'_{m'}$ in the analogous way, as illustrated in Figure \ref{Counting cycles type 3 and 3}. As usual, red lines in the figure indicate edges which may or may not be present.

Let $\Gamma$ be a 5-cycle in the graph containing a vertex from $Z_1$ and a vertex from $Z_2$. Since $\Gamma$ contains vertices in both $F_1$ and $F_2$, it must contain two vertices on the boundaries of each. Therefore it contains a vertex which is on the boundary of both faces, i.e. $w$ or $u_2$. If it does not contain $w$, then in order to contain two vertices on the boundary of each face it must contain $u_1$, $u_2$, and $u_3$ which is impossible. Hence $\Gamma$ contains $w$.

Suppose $\Gamma$ contains $z_1$. If it does not also contain one of $z_2,\dots,z_m$, then it contains $u_1$ and $u_2$ as these are the only other vertices adjacent to $z_1$. But then $\Gamma$ contains $u_1$, $u_2$, and $z_1$, which form a triangle, giving a contradiction. So $\Gamma$ contains one of $z_2,\dots,z_m$. It cannot contain more than one of $z_2,\dots,z_m$ since these are all common neighbours of $z_1$ and $w$.

Hence $\Gamma$ contains a path $u_iz_1z_jw$ for some $i\in\{1,2\}$ and $j\in\{2,\dots,m\}$. There are no common neighbours of $u_1$ and $w$ which are not adjacent to $z_1$, and the only common neighbour of $u_2$ and $w$ which is not adjacent to $z_1$ is $z'_2$. Hence if $\Gamma$ contains $z_1$, then it is of the form $u_2z_1z_jwz'_2$ for some $j\in\{2,\dots,m\}$. This induced 5-cycle is realised if and only if $j\in\{2,\dots,m-1\}$, so there are exactly $m-2$ induced cycles of the required form containing $z_1$. Similarly there are exactly $m'-2$ containing $z'_1$.

Now suppose $\Gamma$ contains neither $z_1$ nor $z'_1$. Then it contains one vertex from $z_2,\dots,z_m$ and one from $z'_2,\dots,z'_{m'}$. All of these vertices are neighbours of $w$, so in fact $\Gamma$ must contain exactly one from each list. The remaining two vertices in $\Gamma$ must be picked from $u_1$, $u_2$, and $u_3$. None of $z_3,\dots,z_{m-1}$ or $z'_3,\dots,z'_{m'-1}$ are adjacent to any of these vertices, so the neighbours of $w$ in the cycle must be one of $z_2$ and $z_m$ and one of $z'_2$ and $z'_{m'}$. For each such combination except $z_m$ and $z'_2$, we see there is a single induced 5-cycle of the required form, and for $z_m$ and $z'_2$ there are none. So in total there are $m+m'-1$ induced 5-cycles in the graph containing a vertex in $Z_1$ and a vertex in $Z_2$.

To summarise our findings:
\begin{itemize}
    \item if $F_1$ is of type 1, then there are no induced 5-cycles containing a vertex in $F_1$,
    \item if $F_1$ is of type 2, then there are no induced 5-cycles containing $a$ or $b$ and a vertex in $F_1$, and there are exactly two containing $c$ and a vertex in $F_1$,
    \item if $F_1$ is of type 3 with $m\geq 3$ vertices, then for $x\in\{a,b\}$ there are exactly $m-1$ induced 5-cycles containing $x$ and a vertex in $F_1$, and there are exactly two containing $c$ and a vertex in $F_1$,
    \item $H[\{u_1,u_2,u_3,w\}\cup Z]$ contains exactly
    \begin{itemize}
        \item one induced 5-cycle per face of type 3 with $m\geq 5$ vertices, four vertices but not both optional edges present, or three vertices with the optional edge not present,
        \item two induced 5-cycles per face of type 3 with four vertices and both optional edges present,
        \item one induced 5-cycle per pair of faces of type 2,
        \item $m$ induced 5-cycles per pair of faces where one is of type 2 and the other is of type 3 with $m$ vertices,
        \item $m$+$m'$-1 induced 5-cycles per pair of faces of type 3, where one has $m$ vertices and the other has $m'$,
    \end{itemize}
    and no further induced 5-cycles.
\end{itemize}

For each assignment of types to $F_1$, $F_2$, and $F_3$, we can now calculate the sizes of $A$, $B$, and $C$ and thence can determine the total number of induced 5-cycles in $H$ in each case. The details of this process are laid out in Appendix \ref{appendix}, where the following claim is proved.

\begin{claim}\label{appendix claim}
$H$ is the principal graph of the required form on $n$ vertices.
\end{claim}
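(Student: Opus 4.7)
The plan is a case analysis on the possible configurations of types assigned to $F_1$, $F_2$, and $F_3$, in each case computing the total count of induced 5-cycles in $H$ and showing that the unique maximum equals $(n^2-8n+22)/3$ and is realised by the principal graph of the required form.

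First I would derive a single closed-form expression for the total. Every induced 5-cycle of $H$ either lies in $H-Z$, or contains exactly one vertex of $A\cup B\cup C$ together with some $Z$-vertices, or lies in $H[\{u_1,u_2,u_3,w\}\cup Z]$. Because every $a'\in A$ shares the principal-neighbour pair $\{u_1,w\}$ with $a$, each $a'\in A$ lies in exactly $k_1$ cycles of the middle type, so the three categories contribute $|A||B|+|A||C|+|B||C|$, $|A|k_1+|B|k_2+|C|k_3$, and $N_Z$ respectively. The hypothesis that $a$, $b$, $c$ each lie in exactly $(2n-8)/3$ induced 5-cycles gives $|A|=p+k_1$, $|B|=p+k_2$, $|C|=p+k_3$, where $p=(n-3|Z|-4)/3$, and a short rearrangement reduces the total to
\[
\frac{(n-4)^2}{3} \;-\; |Z|^2 \;+\; \tfrac{1}{2}(k_1^2+k_2^2+k_3^2) \;+\; N_Z.
\]
Since the first term is a constant in $n$, maximising the total is equivalent to maximising the expression $-|Z|^2+\tfrac12(k_1^2+k_2^2+k_3^2)+N_Z$.

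Next I would read off $|Z|$, $(k_1,k_2,k_3)$, and $N_Z$ for each assignment of types to $(F_1,F_2,F_3)$ from the tabulation already in the main text (noting that each type 3 face carries the extra parameters of its vertex count $m\geq 3$ and a subset of the optional edges $z_iz_{i+1}$). The three-fold symmetry cyclically permuting $(u_1,u_2,u_3)$, $(F_1,F_2,F_3)$ and $(a,b,c)$ reduces the list of configurations up to relabelling. For the distinguished configuration in which one face is of type 3 with $m=4$ and both optional edges $z_2z_3$, $z_3z_4$ present while the other two are of type 2, substitution gives $|Z|=6$, $(k_1,k_2,k_3)=(5,5,2)$, $N_Z=11$, and a total of exactly $(n^2-8n+22)/3$; the vertex counts $|A|=|B|=(n-7)/3$ and $|C|=(n-16)/3$ then match Definition~\ref{reqd form def} for $n\equiv 1\pmod 3$.

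The main obstacle is bookkeeping rather than conceptual difficulty, since $m$ is a priori unbounded and each type 3 face carries independent binary choices for its $m-2$ optional edges. The saving grace is a monotonicity in $|Z|$: when one vertex is added to $Z$ the term $-|Z|^2$ drops by $2|Z|+1$, while the per-vertex increments in $\tfrac12(k_1^2+k_2^2+k_3^2)$ and in $N_Z$ are bounded by a constant that is independent of $n$. Hence only configurations with $|Z|$ of bounded size can compete, and a finite explicit check over those configurations confirms that the distinguished configuration is the unique maximiser and that every other configuration falls strictly below $(n^2-8n+22)/3$. Combined with Lemma~\ref{drawing of required form} applied to the resulting face structure of $H$, this identifies $H$ with the principal graph of the required form on $n$ vertices, proving the claim.
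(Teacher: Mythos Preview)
Your closed-form expression
\[
\frac{(n-4)^2}{3}-|Z|^2+\tfrac12(k_1^2+k_2^2+k_3^2)+N_Z
\]
is correct and is a genuine simplification compared with the paper's approach, which computes $|A|,|B|,|C|$ and the total separately in each of the ten type-assignments. With your formula the bookkeeping in each case is noticeably shorter.

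However, the monotonicity argument you use to reduce to a finite check is wrong as stated. When a vertex is added to a type~3 face $F_1$ (so $m_1\mapsto m_1+1$), the increments satisfy $k_1\mapsto k_1+1$ and $k_2\mapsto k_2+1$, hence $\tfrac12(k_1^2+k_2^2+k_3^2)$ increases by $k_1+k_2+1$, which is linear in $|Z|$, not bounded by an absolute constant. Concretely, in the configuration $F_1$ of type~3 with $m$ vertices and $F_2,F_3$ of type~2 one has $-|Z|^2+\tfrac12\sum k_i^2=-2m-1$ while $N_Z=2m+2$ (for $m\neq 4$), so the two linear terms cancel exactly and the total is \emph{independent} of $m$ except for the bump at $m=4$. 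Thus neither term has bounded increment, and the combined expression is not eventually decreasing in $|Z|$; there is no monotonicity that collapses the problem to bounded $|Z|$.

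A correct argument still needs the ten-case split (as in the paper), though with your formula each case reduces to maximising a simple polynomial in the $m_i$, which is faster. Alternatively, one can show the combined increment of $-|Z|^2+\tfrac12\sum k_i^2$ is at most $-k_3\le -2$ (using $k_1+k_2+k_3=2|Z|$ and $k_3\ge 2$ whenever $F_1$ is of type~3) and that $\Delta N_Z\le 2$, giving a net increment of at most $0$; but equality occurs along the whole family in Case~7, so you still must check the $m=4$ bump by hand, and you still need to compare the ten base configurations to one another.

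Finally, Lemma~\ref{drawing of required form} is not what identifies $H$: once the optimal type assignment and $|A|,|B|,|C|$ are known, the edge set of $H$ is completely determined by the definitions of the types (with the optional edges in the $m=4$ face both present), and this is exactly the principal graph. Lemma~\ref{drawing of required form} concerns drawings of that graph and plays no role here.
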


Label the vertices of $H$ according to Definition \ref{reqd form def}. Fix a drawing of $G$, then by Lemma \ref{drawing of required form} we may assume that the boundaries of the faces in the induced drawing of $H$ consist of the cycles listed in the statement of that lemma. Consider adding the edges in $E(G)\setminus E(H)$ back to this drawing of $H$. Clearly no edge is added to a triangular face. The remaining faces' boundaries are cycles of the form $u_iywy'$ for some $i\in \{1,2,3\}$ and $y,y'\in A\cup B\cup C\cup Z$. Hence every edge in $E(G)\setminus E(H)$ is of the form $u_iw$ for some $i\in \{1,2,3\}$, or $yy'$ for some $y,y'\in A\cup B\cup C\cup Z$ such that there exists $i\in \{1,2,3\}$ such that cycle $u_iywy'$ appears in the list in Lemma \ref{drawing of required form}. The sets of induced 5-cycles in $G$ and $H$ are the same, and there is an induced 5-cycle in $H$ containing $u_i$ and $w$ for each $i\in \{1,2,3\}$, so $u_iw\not\in E(G)\setminus E(H)$ for all $i\in \{1,2,3\}$. Analysing the list of cycles in Lemma \ref{drawing of required form}, we see that $G$ is of the required form.

By Lemma \ref{5-cycle count}, $H$ contains exactly $\frac{1}{3}(n^2-8n+22)$ induced 5-cycles. We know that $G$ has the same set of induced 5-cycles as $H$, so $G$ contains exactly $\frac{1}{3}(n^2-8n+22)$ induced 5-cycles too.
\end{proof}

\section*{Acknowledgements}
Thank you to Tom Johnston, Emil Powierski, and Jane Tan for helpful discussions. Particular thanks to Tom for finding the graph in Figure \ref{10 vertex}, and to Emil and Jane for spotting that an argument to show $f_I(n,C_4)=\frac{1}{2}(n^2-5n+6)$ for large $n$ could be extended to show that $K_{2,n-2}$ uniquely achieves the maximum. Thank you also to Alex Scott for support and helpful comments. The author is grateful to the Heilbronn Institute for Mathematical Research for their support.

\appendix
\section{Proof of Claim \ref{appendix claim}}\label{appendix}
Let $K_1=\frac{n}{3}-|A|$, $K_2=\frac{n}{3}-|B|$, and $K_3=\frac{n}{3}-|C|$. For each assignment of types to $F_1$, $F_2$, and $F_3$ (ignoring symmetric cases), we will assume that $H$ follows this assignment and go through the following process. We know that each of $a$, $b$, and $c$ is in exactly $\frac{2n-8}{3}$ induced 5-cycles in $H$. We also know that $a$ is in exactly $|B|+|C|$ induced 5-cycles which avoid $Z$, and for each assignment of types to faces we know how many induced 5-cycles there are which contain $a$ and a vertex of $Z$. Repeating for $b$ and $c$ gives three linear equations which we can solve to find $K_1$, $K_2$, and $K_3$.

Once we know the sizes of $A$, $B$, and $C$, we can use our earlier findings to determine the total number of induced 5-cycles in $H$ in each case. There are $|A||B|+|A||C|+|B||C|$ induced 5-cycles avoiding $Z$, $|A|\left(\frac{2n-8}{3}-|B|-|C|\right) + |B|\left(\frac{2n-8}{3}-|A|-|C|\right) + |C|\left(\frac{2n-8}{3}-|A|-|B|\right)$ induced 5-cycles containing a vertex of $A\cup B\cup C$ and a vertex of $Z$, and in each case we know how many induced 5-cycles there are avoiding $A\cup B \cup C$. Note that by our earlier findings, for a face of type 3 with three vertices the number of induced 5-cycles in $H$ can only increase if the optional edge is not present, so we may assume this is always the case. Similarly, for a face of type 3 with four vertices we may assume that both optional edges are present.

Of the below cases, case 7 with $m=4$ uniquely gives the most induced 5-cycles, so since $H$ contains $f_I(n,C_5)$ induced 5-cycles we have $f_I(n,C_5)\leq \frac{1}{3}(n^2-8n+22)$ for this $n$. By Lemma \ref{5-cycle count}, in fact $f_I(n,C_5)=\frac{1}{3}(n^2-8n+22)$ for this $n$, and hence case 7 with $m=4$ must occur. In other words, $H$ is the principal graph of the required form on $n$ vertices.

\begin{caseof}
    \case{$F_1$, $F_2$, and $F_3$ type 1.}{Vertices $a$, $b$, and $c$ are in $\frac{2n}{3}-K_2-K_3$, $\frac{2n}{3}-K_1-K_3$, and $\frac{2n}{3}-K_1-K_2$ induced 5-cycles respectively. Solving, we obtain $K_1=K_2=K_3=\frac{4}{3}$. There are no induced 5-cycles avoiding $A\cup B\cup C$. Hence there are $3(\frac{n-4}{3})^2=\frac{1}{3}(n^2-8n+16)$ induced 5-cycles in $H$.}
    
    \case{$F_1$ type 2, $F_2$ and $F_3$ type 1.}{Vertices $a$, $b$, and $c$ are in $\frac{2n}{3}-K_2-K_3$, $\frac{2n}{3}-K_1-K_3$, and $\frac{2n}{3}-K_1-K_2+2$ induced 5-cycles respectively. Solving, we obtain $K_1=K_2=\frac{7}{3}$ and $K_3=\frac{1}{3}$. There are no induced 5-cycles avoiding $A\cup B\cup C$. Hence there are $2(\frac{n-7}{3})(\frac{n-1}{3})+(\frac{n-7}{3})^2+2(\frac{n-1}{3})=\frac{1}{3}(n^2-8n+19)$ induced 5-cycles in $H$.}
    
    \case{$F_1$ and $F_2$ type 2, $F_3$ type 1.}{Vertices $a$, $b$, and $c$ are in $\frac{2n}{3}-K_2-K_3+2$, $\frac{2n}{3}-K_1-K_3$, and $\frac{2n}{3}-K_1-K_2+2$ induced 5-cycles respectively. Solving, we obtain $K_1=K_3=\frac{4}{3}$ and $K_2=\frac{10}{3}$. There is exactly one induced 5-cycle avoiding $A\cup B\cup C$. Hence there are $2(\frac{n-4}{3})(\frac{n-10}{3})+(\frac{n-4}{3})^2+4(\frac{n-4}{3})+1=\frac{1}{3}(n^2-8n+19)$ induced 5-cycles in $H$.}
    
    \case{$F_1$, $F_2$, and $F_3$ type 2.}{Vertices $a$, $b$, and $c$ are in $\frac{2n}{3}-K_2-K_3+2$, $\frac{2n}{3}-K_1-K_3+2$, and $\frac{2n}{3}-K_1-K_2+2$ induced 5-cycles respectively. Solving, we obtain $K_1=K_2=K_3=\frac{7}{3}$. There are exactly 3 induced 5-cycles avoiding $A\cup B\cup C$. Hence there are $3(\frac{n-7}{3})^2+6(\frac{n-7}{3})+3=\frac{1}{3}(n^2-8n+16)$ induced 5-cycles in $H$.}
    
    \case{$F_1$ type 3 with $m$ vertices, $F_2$ and $F_3$ type 1.}{Vertices $a$, $b$, and $c$ are in $\frac{2n}{3}-K_2-K_3+m-1$, $\frac{2n}{3}-K_1-K_3+m-1$, and $\frac{2n}{3}-K_1-K_2+2$ induced 5-cycles respectively. Solving, we obtain $K_1=K_2=\frac{7}{3}$ and $K_3=m-\frac{2}{3}$. There is exactly one induced 5-cycle avoiding $A\cup B\cup C$ for $m\neq 4$. Hence there are $2(\frac{n-7}{3})(\frac{n+2}{3}-m)+(\frac{n-7}{3})^2+2(m-1)(\frac{n-7}{3})+2(\frac{n+2}{3}-m)+1=\frac{1}{3}(n^2-8n+28-6m)$ induced 5-cycles in $H$ for $m\neq 4$. Since $m\geq 3$, this is at most $\frac{1}{3}(n^2-8n+10)$. For $m=4$ there is one more induced 5-cycle, so there are $\frac{1}{3}(n^2-8n+7)$ in total.}
    
    \case{$F_1$ type 3 with $m$ vertices, $F_2$ type 2, $F_3$ type 1.}{Vertices $a$, $b$, and $c$ are in $\frac{2n}{3}-K_2-K_3+m+1$, $\frac{2n}{3}-K_1-K_3+m-1$, and $\frac{2n}{3}-K_1-K_2+2$ induced 5-cycles respectively. Solving, we obtain $K_1=\frac{4}{3}$, $K_2=\frac{10}{3}$, and $K_3=m+\frac{1}{3}$. There are exactly $m+1$ induced 5-cycles avoiding $A\cup B\cup C$ for $m\neq 4$. Hence there are $(\frac{n-4}{3})(\frac{n-10}{3})+(\frac{n-4}{3})(\frac{n-1}{3}-m)+(\frac{n-10}{3})(\frac{n-1}{3}-m)+(m+1)(\frac{n-4}{3})+(m-1)(\frac{n-10}{3})+2(\frac{n-1}{3}-m)+m+1=\frac{1}{3}(n^2-8n+25-3m)$ induced 5-cycles in $H$ for $m\neq 4$. Since $m\geq 3$, this is at most $\frac{1}{3}(n^2-8n+16)$. For $m=4$ there is one more induced 5-cycle, so there are $\frac{1}{3}(n^2-8n+16)$ in total.}
    
    \case{$F_1$ type 3 with $m$ vertices, $F_2$ and $F_3$ type 2.}{Vertices $a$, $b$, and $c$ are in $\frac{2n}{3}-K_2-K_3+m+1$, $\frac{2n}{3}-K_1-K_3+m+1$, and $\frac{2n}{3}-K_1-K_2+2$ induced 5-cycles respectively. Solving, we obtain $K_1=K_2=\frac{7}{3}$ and $K_3=m+\frac{4}{3}$. There are exactly $2m+2$ induced 5-cycles avoiding $A\cup B\cup C$ for $m\neq 4$. Hence there are $2(\frac{n-7}{3})(\frac{n-4}{3}-m)+(\frac{n-7}{3})^2+2(m+1)(\frac{n-7}{3})+2(\frac{n-4}{3}-m)+2m+2=\frac{1}{3}(n^2-8n+19)$ induced 5-cycles in $H$ for $m\neq 4$. For $m=4$ there is one more, so there are $\frac{1}{3}(n^2-8n+22)$.}
    
    \case{$F_1$ type 3 with $m$ vertices, $F_2$ type 3 with $m'$ vertices, $F_3$ type 1.}{Vertices $a$, $b$, and $c$ are in $\frac{2n}{3}-K_2-K_3+m+1$, $\frac{2n}{3}-K_1-K_3+m+m'-2$, and $\frac{2n}{3}-K_1-K_2+m'+1$ induced 5-cycles respectively. Solving, we obtain $K_1=m'+\frac{1}{3}$, $K_2=\frac{10}{3}$, and $K_3=m+\frac{1}{3}$. There are exactly $m+m'+1$ induced 5-cycles avoiding $A\cup B\cup C$ if $m,m'\neq 4$. Hence there are $(\frac{n-1}{3}-m')(\frac{n-10}{3})+(\frac{n-1}{3}-m')(\frac{n-1}{3}-m)+(\frac{n-10}{3})(\frac{n-1}{3}-m)+(m+1)(\frac{n-1}{3}-m')+(m+m'-2)(\frac{n-10}{3})+(m'+1)(\frac{n-1}{3}-m)+m+m'+1=\frac{1}{3}(n^2-8n+28-3mm')$ induced 5-cycles in $H$ for $m, m'\neq 4$. Since $m,m'\geq 3$, this is at most $\frac{1}{3}(n^2-8n+1)$. If one or both of $m$ and $m'$ is 4, then this increases by at most 2, to give a total of at most $\frac{1}{3}(n^2-8n+7)$.}
    
    \case{$F_1$ type 3 with $m$ vertices, $F_2$ type 3 with $m'$ vertices, $F_3$ type 2.}{Vertices $a$, $b$, and $c$ are in $\frac{2n}{3}-K_2-K_3+m+1$, $\frac{2n}{3}-K_1-K_3+m+m'$, and $\frac{2n}{3}-K_1-K_2+m'+1$ induced 5-cycles respectively. Solving, we obtain $K_1=m'+\frac{4}{3}$, $K_2=\frac{7}{3}$, and $K_3=m+\frac{4}{3}$. There are exactly $2m+2m'+1$ induced 5-cycles avoiding $A\cup B\cup C$ if $m,m'\neq 4$. Hence there are $(\frac{n-4}{3}-m')(\frac{n-7}{3})+(\frac{n-4}{3}-m')(\frac{n-4}{3}-m)+(\frac{n-7}{3})(\frac{n-4}{3}-m)+(m+1)(\frac{n-4}{3}-m')+(m+m')(\frac{n-7}{3})+(m'+1)(\frac{n-4}{3}-m)+2m+2m'+1=\frac{1}{3}(n^2-8n+19+3(m+m'-mm'))$ induced 5-cycles in $H$ for $m,m'\neq 4$. For $m,m'\geq 3$ we see that $m+m'-mm'$ is decreasing in $m$ and $m'$, so this is at most $\frac{1}{3}(n^2-8n+10)$. If one or both of $m$ and $m'$ is 4, then this increases by at most 2, to give a total of at most $\frac{1}{3}(n^2-8n+16)$.}
    
    \case{$F_1$, $F_2$, and $F_3$ type 3 with $m$, $m'$, and $m''$ vertices respectively.}{Vertices $a$, $b$, and $c$ are in $\frac{2n}{3}-K_2-K_3+m+m''$, $\frac{2n}{3}-K_1-K_3+m+m'$, and $\frac{2n}{3}-K_1-K_2+m'+m''$, induced 5-cycles respectively. Solving, we obtain $K_1=m'+\frac{4}{3}$, $K_2=m''+\frac{4}{3}$, and $K_3=m+\frac{4}{3}$. There are exactly $2m+2m'+2m''$ induced 5-cycles avoiding $A\cup B\cup C$ if $m,m',m''\neq 4$. Hence there are $(\frac{n-4}{3}-m')(\frac{n-4}{3}-m'')+(\frac{n-4}{3}-m')(\frac{n-4}{3}-m)+(\frac{n-4}{3}-m'')(\frac{n-4}{3}-m)+(m+m'')(\frac{n-4}{3}-m')+(m+m')(\frac{n-4}{3}-m'')+(m'+m'')(\frac{n-4}{3}-m)+2m+2m'+2m''=\frac{1}{3}(n^2-8n+16+3(2m+2m'+2m''-mm'-m'm''-mm''))$ induced 5-cycles in $H$ for $m,m',m''\neq 4$. For $m,m',m''\geq 3$ we see that $2m+2m'+2m''-mm'-m'm''-mm''$ is decreasing in $m$, $m'$, and $m''$, so this is at most $\frac{1}{3}(n^2-8n-11)$. If any of $m$, $m'$, and $m''$ are 4, then this increases by at most 3, to give a total of at most $\frac{1}{3}(n^2-8n-2)$.}
\end{caseof}
\end{document}